\newtheorem{theorem}{Theorem}[section]
\newtheorem{lemma}{Lemma}[section]
\newtheorem{corollary}{Corollary}[section]
\newtheorem{claim}{Claim}[section]
\begin{document}
\textwidth 150mm \textheight 225mm
\title{Tur\'{a}n numbers of general hypergraph star forests \thanks{Supported by the National Natural Science Foundation of
China (No.12271439) and China Scholarship Council (No. 202206290003).}}
\author{{Lin-Peng Zhang\textsuperscript{a,b}, Hajo Broersma\textsuperscript{b}\footnote{Corresponding author.}, Ligong Wang\textsuperscript{a}}\\
{\small \textsuperscript{a} School of Mathematics and Statistics,}\\
{\small Northwestern Polytechnical University, Xi'an, Shaanxi 710129, P.R. China.}\\
{\small \textsuperscript{b} Faculty of Electrical Engineering, Mathematics and Computer Science,}\\
{\small  University of Twente, P.O. Box 217, 7500 AE Enschede, the Netherlands}\\
{\small E-mail: lpzhangmath@163.com, h.j.broersma@utwente.nl, lgwangmath@163.com}}
\date{}
\maketitle
\begin{center}
\begin{minipage}{135mm}
\vskip 0.3cm
\begin{center}
{\small {\bf Abstract}}
\end{center}
{\small Let $\mathcal{F}$ be a family of $r$-uniform hypergraphs,  and let $H$ be an $r$-uniform hypergraph. 
Then $H$ is called $\mathcal{F}$-free if it does not contain any member of 
 $\mathcal{F}$ as a subhypergraph. The Tur\'{a}n number of $\mathcal{F}$, denoted by $ex_r(n,\mathcal{F})$, is the maximum number of hyperedges in an $\mathcal{F}$-free $n$-vertex $r$-uniform hypergraph. 
Our current results are motivated by earlier results on Tur\'{a}n numbers of star forests and hypergraph star forests.  
In particular, Lidick\'{y}, Liu and Palmer [Electron. J. Combin. 20 (2013)] determined the Tur\'{a}n number $ex(n,F)$ of a star forest $F$ for sufficiently large $n$. 
Recently, Khormali and Palmer [European. J. Combin. 102 (2022) 103506] generalized the above result to three different well-studied hypergraph settings, but restricted to the case that all stars in the hypergraph star forests are identical. We further generalize these results to general hypergraph star forests. 
 \vskip 0.1in \noindent {\bf Key Words}: \ Tur\'{a}n number; Berge hypergraph; star forest \vskip
0.1in \noindent {\bf AMS Subject Classification (2020)}: \ 05C35, 05C65}
\end{minipage}
\end{center}

\section{Introduction}
The results in this paper deal with Tur\'{a}n numbers of hypergraph star forests. Our work is motivated by recent results by different groups of researchers, and aims at generalizing their results. Before we can give more details, we need to introduce and recall some essential terminology and notation. 

Let $\mathcal{F}$ be a family of $r$-uniform hypergraphs. An $r$-uniform hypergraph $H$ is called $\mathcal{F}$-free if it does not contain any 
member of $\mathcal{F}$ as its subhypergraph. Note that here we mean subhypergraph, and not necessarily induced subhypergraph. 
The Tur\'{a}n number of $\mathcal{F}$, denoted by $ex_r(n,\mathcal{F})$, is the maximum number of hyperedges in an $r$-uniform $n$-vertex $\mathcal{F}$-free hypergraph. When $r=2$, instead of 
writing $ex_2(n,\mathcal{G})$ we use $ex(n,\mathcal{G})$ for a family of graphs $\mathcal{G}$. 
Determining the Tur\'{a}n numbers for specific graphs and graph classes is a classic and well-studied area within graph theory. This started already back in 1907, when Mantel~\cite{M07} determined the Tur\'{a}n number of a triangle $K_3$. The term Tur\'{a}n number reflects the fact that in 1941, Tur\'{a}n~\cite{T41} generalized Mantel's result by his seminal theorem that settles the problem of determining the Tur\'{a}n number of a complete graph $K_k$ for general $k$. Since then, there have appeared many papers and monographs on this type of problems. One of the most fundamental results within this area of research is the Erd\H{o}s-Stone-Simonovits theorem, which gives the following asymptotic result for all $k$-chromatic graphs $F$:
\begin{equation*}
ex(n,F)=\bigg(1-\frac{1}{k-1}\bigg)\frac{n^2}{2}+o(n^2).
\end{equation*}
Here $k$ denotes the chromatic number of $F$, {\it i.e.}, the smallest number of colors needed to assign one color to each vertex of $F$ in such a way that adjacent vertices receive different colors. 
Note that the above formula only gives $ex(n,F)=o(n^2)$ for a bipartite (2-chromatic) graph $F$. Determining Tur\'{a}n numbers for bipartite graphs remains an active area of research. We refer to the survey of F\"{u}redi and Simonovits~\cite{FS13} for an extensive history.

In this paper, we focus on sparse bipartite graphs, namely star forests, and their hypergraph counterparts. 
Fix a positive integer $\ell$. Let  $S_{\ell}$ denote the star with $\ell$ edges, {\it i.e.}, the complete bipartite graph $K_{1,\ell}$. 
A star forest is a graph consisting of a number of vertex-disjoint stars. Let $G$ and $H$ be two vertex-disjoint graphs. The union of $G$ and $H$, denoted 
by $G\cup H$, is the graph with vertex set $V(G)\cup V(H)$ and edge set $E(G)\cup E(H)$. Denote by 
$k\cdot G$ the graph composed of $k$ pairwise vertex-disjoint copies of the graph $G$.

In 2013, Lidick\'{y}, Liu and Palmer~\cite{LLP13} determined the Tur\'{a}n number of a star forest for sufficiently large $n$. 
\begin{theorem}[Lidick\'{y}, Liu and Palmer, \cite{LLP13}]\label{ch1:LLP}
Let $d_1\ge d_2\ge \cdots \ge d_k$ be $k$ positive integers. For $n$ sufficiently large,
$$
ex\bigg(n,\bigcup\limits_{i=1}^k S_{d_i}\bigg)=\mathop{max}\limits_{1\le i\le k}\Bigg\{(i-1)(n-i+1)+\binom{i-1}{2}+\left\lfloor \frac{d_i-1}{2}(n-i+1)\right\rfloor\Bigg\}.
$$
\end{theorem}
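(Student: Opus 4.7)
The plan is to establish matching lower and upper bounds. For the lower bound I exhibit, for each $i \in \{1, \ldots, k\}$, an $n$-vertex graph $H_i$ with exactly $f(i) := (i-1)(n-i+1) + \binom{i-1}{2} + \lfloor (d_i-1)(n-i+1)/2\rfloor$ edges that contains no copy of the forest. Take $H_i$ to be the join $K_{i-1} \vee R$, where $R$ is an (almost) $(d_i-1)$-regular graph on the remaining $n-i+1$ vertices (existence is guaranteed by standard parity arguments). To verify that $H_i$ is $\bigcup_{j=1}^k S_{d_j}$-free, suppose such a star forest embeds into $H_i$: at most $i-1$ of its stars can have centers in the apex clique, so some star $S_{d_j}$ with $j \geq i$ has its center $c$ outside the clique. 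The vertex $c$ has degree $(i-1) + (d_i-1)$ in $H_i$, but its $i-1$ apex neighbors are already occupied as centers of other stars; hence $c$ has at most $d_i - 1 < d_j$ admissible leaves, a contradiction. Taking the maximum of $f(i)$ over $i$ gives the lower bound.

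For the upper bound I proceed by induction on $k$. The base case $k=1$ is the classical Erd\H{o}s--Gallai theorem $ex(n, S_{d_1}) = \lfloor (d_1-1)n/2 \rfloor$, which matches $f(1)$. For the inductive step, given an $n$-vertex graph $G$ with $e(G) > g(n) := \max_i f(i)$, I note that $g(n) \geq f(1)$, so Erd\H{o}s--Gallai yields a vertex $v$ of degree at least $d_1$. I would embed $S_{d_1}$ at $v$ by taking as leaves the $d_1$ neighbors of $v$ of smallest degree, say $u_1, \ldots, u_{d_1}$, and let $G'$ be the graph on $n' := n - d_1 - 1$ vertices obtained by deleting $\{v, u_1, \ldots, u_{d_1}\}$. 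Applying the inductive hypothesis to $G'$ with the smaller forest $\bigcup_{j=2}^k S_{d_j}$ then yields the remaining $k-1$ stars, vertex-disjoint from the first.

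The main technical obstacle is verifying that $e(G')$ exceeds the inductive threshold $g'(n')$ corresponding to $\bigcup_{j=2}^k S_{d_j}$. The number of edges lost under the deletion is at most $\deg(v) + \sum_{j=1}^{d_1} \deg(u_j) - d_1$, which the minimum-degree choice of the $u_j$'s allows to be controlled by the average degree inside $N(v)$. One would then compare $g(n) - g'(n')$ with this loss, splitting into the case where $\deg(v)$ is close to $d_1$ (handled directly) and the case where $\deg(v)$ is much larger (which forces a dense substructure that can be exploited by reselecting the embedded star so its center has a more balanced neighborhood). The maximizer $i^{\star}$ in $g(n)$ and the one in $g'(n')$ need not coincide, so the comparison must be carried out uniformly across all candidate indices, and the hypothesis that $n$ is sufficiently large is invoked precisely to absorb the constant and $O(d_1^2)$ lower-order corrections arising from the shift $n \mapsto n'$. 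I expect this book-keeping -- tracking how $i^{\star}$ migrates under deletion -- to be the most delicate part of the argument.
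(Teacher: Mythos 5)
A preliminary remark: the paper does not prove this statement at all --- it is quoted from Lidick\'{y}, Liu and Palmer \cite{LLP13} --- so the only internal point of comparison is the proof of Theorem \ref{ch1:thm1} in Section \ref{sec-thm1}, which is the hypergraph analogue and follows the same template as \cite{LLP13}. Your lower bound uses the correct extremal construction $K_{i-1}\vee R$, but the verification of freeness is off in two places. First, the index should go the other way: among the $i$ largest stars $S_{d_1},\dots,S_{d_i}$ at most $i-1$ can have centers in the apex set, so some $S_{d_j}$ with $j\le i$ (hence $d_j\ge d_i$) has its center in $R$; with your $j\ge i$ you only get $d_j\le d_i$ and the inequality $d_i-1<d_j$ need not hold. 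Second, the assertion that the $i-1$ apex neighbours of $c$ ``are already occupied as centers of other stars'' is unjustified --- they could be leaves or unused. The correct argument is a count over apex vertices: if $t$ of the first $i$ stars have centers in the apex set and $i-t$ have centers in $R$, each of the latter has at most $d_i-1$ neighbours inside $R$ and so needs at least one apex leaf, consuming at least $t+(i-t)=i>i-1$ apex vertices in total.

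The upper bound has a genuine gap. Deleting the entire star $\{v,u_1,\dots,u_{d_1}\}$ and inducting does not close numerically: already for $k=2$, $d_1=d_2=d$ and $G$ equal to the extremal graph $K_1\vee R$ plus one edge, you may have $\deg(v)=n-1$ and each $u_j$ of degree $d$, so the deletion destroys about $n-1+d^2-d$ edges while the threshold only drops by about $n-1+(d^2-1)/2$; the remaining graph falls short of $ex(n-d-1,S_{d})$ by roughly $d^2/2-d$, so the inductive hypothesis cannot be invoked. Your proposed remedies (``reselecting the embedded star'', ``a more balanced neighborhood'') are not arguments. The repair --- and this is exactly the dichotomy the paper runs in Section \ref{sec-thm1} for the expansion version --- is to case-split on $\Delta(G)$ rather than on $\deg(v)$ relative to $d_1$. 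If $\Delta(G)$ is small (bounded by $e(G)/\sum_{i<k}(d_i+1)$, say), build all $k$ stars greedily: removing the at most $d_i+1$ vertices of one star destroys at most $(d_i+1)\Delta(G)$ edges, and enough edges survive to find the next star by the $k=1$ case. If $\Delta(G)$ is large, delete \emph{only} the maximum-degree vertex $v$: the loss is $\deg(v)\le n-1$, which matches exactly the drop $(n-i+1)+\bigl(\binom{i-1}{2}-\binom{i-2}{2}\bigr)=n-1$ in the threshold when the forest shrinks to $\bigcup_{j=2}^k S_{d_j}$ and $n$ to $n-1$, so induction on $k$ applies to $G-v$; then $\deg(v)=\Omega(n)$ leaves more than enough neighbours of $v$ outside the $O(1)$ vertices of the smaller forest to complete $S_{d_1}$ at $v$. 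As written, your induction step would fail on the extremal configurations themselves.
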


We come back to this later, but first continue with a discussion on hypergraph analogues of  Tur\'{a}n type extremal problems. 
Determining Tur\'{a}n numbers for hypergraphs is another direction which has attracted a lot of attention. 
One of the issues that we meet is to give a suitable and reasonable generalization of graph structures to hypergraph structures. 
It is natural to generalize the notion of a complete graph to a complete hypergraph in the following way. 
Since the edge set of a complete graph $K_k$ on $k$ vertices is the set of all possible 2-element subsets of these $k$ vertices, the set of hyperedges of a  
complete $r$-uniform $k$-vertex hypergraph is the set of all possible $r$-element subsets of these $k$ vertices.  
The difficulty of determining the Tur\'{a}n numbers for hypergraphs is reflected by the fact that we still do not 
know the Tur\'{a}n number of $K^3_4$, the complete 3-uniform 4-vertex hypergraph.

In this paper, we aim to determine Tur\'{a}n numbers for analogues of star forests in the hypergraph setting. 
For this, we will in fact consider three different hypergraph settings that are known from literature. The first hypergraph setting we consider is based on the notion of expansion. 

Fix a graph $F$ and an integer $r\ge 2$. The expansion of $F$ is the $r$-uniform hypergraph $F^+$ constructed by adding $r-2$ new distinct vertices to each edge of $F$, where all added new vertices are distinct. When $r=2$, the expansion $F^+$ is exactly the original graph $F$. There already exist several papers involving Tur\'{a}n numbers for various expansions (See, {\it e.g.}, \cite{BK14,F14,FJ14,FJS14,KMV15-1,KMV15-2,KMV17,M06,P13}). 
The case with $F^+=S^+_{\ell}$ has a considerable history. See the survey~\cite{MV11} for an overview.

In 1977, Duke and Erd\H{o}s~\cite{DE97} proved the following theorem. 
\begin{theorem}[Duke and Erd\H{o}s, \cite{DE97}]\label{ch1:DE}
Fix integers $\ell\ge 2$ and $r\ge 3$. Then there exists a constant $c(r)$ such that for sufficiently large $n$, 
$$
ex_r(n,S^+_{\ell})\le c(r)\ell(\ell-1)n^{r-2}.
$$
\end{theorem}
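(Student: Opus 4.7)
The plan is as follows. Fix an $S^+_\ell$-free $r$-uniform hypergraph $H$ on $n$ vertices. For each $v\in V(H)$, define the \emph{link} $L_v$ to be the $(r-1)$-uniform hypergraph on $V(H)\setminus\{v\}$ whose hyperedges are $\{e\setminus\{v\}:v\in e\in E(H)\}$, so that $|L_v|=d(v)$. The crucial observation is that $H$ contains $S^+_\ell$ with center $v$ if and only if $L_v$ contains a matching of size $\ell$; hence the $S^+_\ell$-freeness of $H$ is equivalent to $\nu(L_v)\le\ell-1$ for every vertex $v$.

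Given this equivalence, the first step is a greedy covering argument on each link: a maximum matching of $L_v$ has at most $\ell-1$ hyperedges covering at most $(r-1)(\ell-1)$ vertices, and by maximality every other hyperedge of $L_v$ meets this vertex set, which yields $d(v)\le(r-1)(\ell-1)\binom{n-2}{r-2}$. Summing this naively gives only $e(H)=O_r(\ell\, n^{r-1})$, which is too weak by a factor of $n/\ell$. To sharpen it, I would invoke the Erd\H{o}s matching theorem (valid for $n$ sufficiently large in terms of $r$ and $\ell$): an $(r-1)$-uniform hypergraph on $n-1$ vertices with matching number at most $\ell-1$ has at most $\binom{n-1}{r-1}-\binom{n-\ell}{r-1}\sim(\ell-1)\binom{n-1}{r-2}$ hyperedges, and its extremal examples are exactly the families of $(r-1)$-subsets meeting a fixed $(\ell-1)$-element set.

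The main obstacle, which I expect to be the hardest step, is to \emph{globalise} this local structural information. The intuition is that if many vertices $v$ have nearly extremal links, then the hyperedges of $H$ should be essentially supported on a single small ``core'' set $W\subseteq V(H)$ of size $O(\ell)$; since the number of $r$-subsets meeting such a $W$ in at least two vertices is at most $\binom{|W|}{2}\binom{n-2}{r-2}=O_r(\ell^2\, n^{r-2})$, this would match the target. My approach is an iterative peeling argument: while there exists a vertex $v$ of ``very high'' degree $d(v)\ge C\ell\, n^{r-2}$ (for a suitable constant $C=C(r)$), add $v$ to $W$ and delete it from the hypergraph; crucially, the residual hypergraph remains $S^+_\ell$-free. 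After at most $O(\ell)$ such rounds the maximum degree drops below the threshold and the remaining hypergraph carries $O_r(\ell^2\, n^{r-2})$ hyperedges by a second, sharper application of the link bound; the deleted hyperedges are all incident to $W$ and their number is controlled by the codegree of pairs inside $W$. The delicate points are (i) proving that the peeling terminates in $O(\ell)$ rounds, which amounts to showing that $\ell$ simultaneously near-extremal centers would assemble, by the sparseness available when $n\gg\ell$, into a global copy of $S^+_\ell$, and (ii) balancing the thresholds so that both contributions simultaneously fit into the bound $c(r)\ell(\ell-1)\, n^{r-2}$.
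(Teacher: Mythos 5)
First, a remark on the comparison itself: the paper does not prove this statement at all --- Theorem~\ref{ch1:DE} is quoted from Duke and Erd\H{o}s \cite{DE97} as a black box (the paper only exhibits the lower-bound construction of all $r$-sets through a fixed pair) --- so there is no in-paper proof to measure yours against and I can only assess the proposal on its own terms. Your opening reduction is correct and standard: $H$ contains $S^+_\ell$ with center $v$ exactly when the link $L_v$ has a matching of size $\ell$; the greedy cover of a maximum matching gives $d(v)\le (r-1)(\ell-1)\binom{n-2}{r-2}$; and the Erd\H{o}s matching theorem sharpens this to $d(v)\le\binom{n-1}{r-1}-\binom{n-\ell}{r-1}\le(\ell-1)\binom{n-2}{r-2}$ for large $n$. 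You are also right that summing degree bounds loses a factor of order $n/\ell$ and that globalisation is the crux.

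The globalisation you propose, however, does not close that gap; it defers it to two unproved claims, and the first is a genuine hole. (i) You assert that once the maximum degree of the residual hypergraph drops below $C\ell n^{r-2}$, it carries $O_r(\ell^2 n^{r-2})$ hyperedges ``by a second, sharper application of the link bound.'' No such application is available: a maximum-degree bound of $C\ell n^{r-2}$ yields, by summing degrees, only $O_r(\ell n^{r-1})$ edges --- exactly the too-weak bound you started from --- and the link/matching information has already been fully spent in deriving the degree bound. The whole content of the theorem is precisely that an $S^+_\ell$-free hypergraph with \emph{no} high-degree vertex still has only $O_r(\ell^2n^{r-2})$ edges, so invoking that for the residual hypergraph is circular. (ii) The claim that the peeling stops after $O(\ell)$ rounds is plausible but not supported by the reason you give: $S^+_\ell$ has a \emph{single} center, so $\ell$ distinct near-extremal vertices do not ``assemble'' into a copy (the extremal example of all $r$-sets meeting a fixed $\ell$-set in at least two points has exactly $\ell$ such vertices and no copy). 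What one must actually show is that every high-degree vertex lies in a pair of codegree $\Omega(n^{r-2})$ and that the graph of such pairs has bounded degree \emph{and} boundedly many non-isolated vertices; the latter needs its own argument, and the codegree threshold it requires pulls against the one needed in the final edge count. If you do establish $|W|=O(\ell)$, your accounting of the deleted hyperedges is fine (the individual degree bound already handles hyperedges meeting $W$ in one vertex), but point (i) remains fatal as written: the low-degree part still needs a structural argument, and that argument \emph{is} the theorem.
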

We can construct an extremal hypergraph with size $ex_r(n,S^+_{\ell})$ of order $n^{r-2}$ as follows. 
Take an $r$-uniform hypergraph all hyperedges of which contain a fixed pair of vertices.

For presenting the next result due to Erd\H{o}s~\cite{E65}, let $M_k$ be a matching of size $k$, {\it i.e.}, 
a set of $k$ disjoint edges. We can view $M_k$ as a star forest each star of which is $S_1$, an edge. The following well-known theorem 
gives the Tur\'{a}n number of the expansion of $M_k$, when the number of vertices is sufficiently large.
\begin{theorem}[Erd\H{o}s, \cite{E65}]\label{ch1:E}
Fix integers $k\ge 1$ and $r\ge 2$. Then for sufficiently large $n$, 
$$
ex_r(n,M^+_{k})=\binom{n}{r}-\binom{n-k+1}{r}.
$$
\end{theorem}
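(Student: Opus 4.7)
The plan is to establish the lower and upper bounds separately. For the lower bound $ex_r(n,M_k^+)\ge \binom{n}{r}-\binom{n-k+1}{r}$, fix a set $S\subseteq[n]$ with $|S|=k-1$ and let $H_0$ consist of all $r$-subsets of $[n]$ meeting $S$. A direct count gives $|E(H_0)|=\binom{n}{r}-\binom{n-k+1}{r}$, and any family of pairwise disjoint hyperedges in $H_0$ has size at most $|S|=k-1$ by pigeonhole, so $H_0$ is $M_k^+$-free.

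For the upper bound I would induct on $k$. The base case $k=1$ is trivial. For the inductive step, let $H$ be an $M_k^+$-free $r$-uniform hypergraph on a sufficiently large vertex set $[n]$, and let $M=\{e_1,\ldots,e_t\}$ be a maximum matching in $H$, so $t\le k-1$. If $t\le k-2$ then $H$ is $M_{k-1}^+$-free, and the inductive hypothesis yields $|E(H)|\le \binom{n}{r}-\binom{n-k+2}{r}<\binom{n}{r}-\binom{n-k+1}{r}$, so we may assume $t=k-1$ and focus on exhibiting a vertex cover of size $k-1$.

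The structural core of the argument is the claim that at most one vertex of each $e_i$ can have degree exceeding a threshold $\tau=\Theta(n^{r-2})$. Indeed, if $u,v\in e_i$ both had degrees above $\tau$, then one could greedily pick a hyperedge $f_u\ni u$ disjoint from $V(M)\setminus\{u\}$ and then a hyperedge $f_v\ni v$ disjoint from $V(M)\setminus\{v\}$ and from $f_u\setminus\{u\}$, since each forbidden set kills only $O(n^{r-2})$ hyperedges through the relevant vertex; replacing $e_i$ by $\{f_u,f_v\}$ would enlarge $M$ to a $k$-matching, contradicting maximality. Choosing a highest-degree vertex $v_i\in e_i$ and setting $S=\{v_1,\ldots,v_{k-1}\}$, every hyperedge avoiding $S$ must be incident to one of the $(r-1)(k-1)$ low-degree vertices of $V(M)\setminus S$, so the number of such hyperedges is $O(n^{r-2})$, yielding $|E(H)|\le \binom{n}{r}-\binom{n-k+1}{r}+O(n^{r-2})$.

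The main obstacle is tightening this asymptotic estimate to the exact bound, i.e.\ eliminating the $O(n^{r-2})$ slack. I would close it with a stability-type step: in the extremal regime, each $v_i$ must itself be high-degree, so if $H$ contained a hyperedge $e^*$ avoiding $S$ one could greedily select mutually disjoint hyperedges $f_i\ni v_i$ also disjoint from $e^*$, whence $\{e^*,f_1,\ldots,f_{k-1}\}$ would be a $k$-matching, a contradiction; hence $E(H)\subseteq\{e\in\binom{[n]}{r}:e\cap S\ne\emptyset\}$. An alternative route, closer to Erd\H{o}s's original proof, is to apply Frankl-type shifting to reduce to a left-compressed hypergraph and then perform the count directly on the resulting lexicographic initial segment.
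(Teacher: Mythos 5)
The paper contains no proof of this statement: it is quoted verbatim as a classical theorem of Erd\H{o}s~\cite{E65} and used only as a black box (it supplies the degenerate case $d_1=1$ of Theorem~\ref{ch1:thm1}), so there is nothing in the paper to compare your argument against line by line. Judged on its own, your proof is essentially correct and is the standard ``maximum matching plus degree threshold'' argument for large $n$. The lower-bound construction and its verification are exactly right. In the upper bound, the two structural steps are sound: if two vertices $u,v$ of an edge $e_i$ of a maximum matching $M$ both had degree above $\tau=Cn^{r-2}$, one could replace $e_i$ by two disjoint edges avoiding the rest of $V(M)$ and enlarge $M$; and since $M$ is maximal, every edge avoiding $S=\{v_1,\dots,v_{k-1}\}$ hits one of the $(r-1)(k-1)$ remaining low-degree vertices of $V(M)$, giving the $O(n^{r-2})$ slack. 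The only place you wave your hands is the assertion that in the extremal regime ``each $v_i$ must itself be high-degree''; this does require a line of justification, but it is a pigeonhole computation you should make explicit: if $|E(\mathcal{H})|>\binom{n}{r}-\binom{n-k+1}{r}=\sum_{j=1}^{k-1}\binom{n-j}{r-1}$, then $\sum_i d(v_i)\ge |E(\mathcal{H})|-O(n^{r-2})\ge (k-1)\binom{n-1}{r-1}-O(n^{r-2})$, and since each summand is at most $\binom{n-1}{r-1}$, every $d(v_i)$ is $\binom{n-1}{r-1}-O(n^{r-2})=\Omega(n^{r-1})$. With that inserted, the greedy selection of pairwise disjoint $f_i\ni v_i$ avoiding a putative edge $e^*$ disjoint from $S$ produces the forbidden $k$-matching, and the exact bound follows. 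It is worth noting that this maximum-degree dichotomy and greedy-embedding style is precisely the template the paper itself follows in its proof of Theorem~\ref{ch1:thm1} in Section~\ref{sec-thm1}; your alternative suggestion via left-compression is closer to Erd\H{o}s's original route but is not needed here.
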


In a recently published paper, Khormali and Palmer~\cite{KP22} generalized the above result to the expansion $k\cdot S^+_\ell$ of a star forest $k\cdot S_\ell$ composed 
of $k$ copies of the star $S_{\ell}$.
\begin{theorem}[Khormali and Palmer, \cite{KP22}]\label{ch1:KP1}
Fix integers $\ell,k\ge 1$ and $r\ge 2$. Then for sufficiently large $n$,
$$
ex_r(n,k\cdot S^+_\ell)=\binom{n}{r}-\binom{n-k+1}{r}+ex_r(n-k+1, S^+_\ell).
$$
\end{theorem}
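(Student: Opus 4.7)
Our plan is to establish the lower and upper bounds separately. For the lower bound, we fix a set $A\subseteq [n]$ with $|A|=k-1$ and take $H$ to be the $r$-uniform hypergraph whose hyperedge set is the union of all $r$-subsets of $[n]$ meeting $A$ together with the hyperedges of an extremal $S^+_\ell$-free $r$-uniform hypergraph on $[n]\setminus A$. Then $|E(H)|=\binom{n}{r}-\binom{n-k+1}{r}+ex_r(n-k+1,S^+_\ell)$. To check that $H$ is $k\cdot S^+_\ell$-free, we note that any $k$ pairwise vertex-disjoint copies of $S^+_\ell$ would, since $|A|=k-1$, have at least one copy entirely contained in $[n]\setminus A$, contradicting $S^+_\ell$-freeness of the second hyperedge family.

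For the upper bound we proceed by induction on $k$. The base case $k=1$ is tautological. For the inductive step, we assume the result for $k-1$, take $H$ to be $k\cdot S^+_\ell$-free on $n$ (sufficiently large) vertices, and suppose for contradiction that $|E(H)|>\binom{n}{r}-\binom{n-k+1}{r}+ex_r(n-k+1,S^+_\ell)$. We iteratively pick $v_i$ to be a maximum-degree vertex of $H_{i-1}:=H-\{v_1,\ldots,v_{i-1}\}$ for $i=1,\ldots,k-1$. Using $d_{H_{i-1}}(v_i)\le\binom{n-i}{r-1}$ and the hockey-stick identity $\sum_{i=1}^{k-1}\binom{n-i}{r-1}=\binom{n}{r}-\binom{n-k+1}{r}$, we obtain
\begin{equation*}
|E(H_{k-1})|\ge |E(H)|-\left[\binom{n}{r}-\binom{n-k+1}{r}\right] > ex_r(n-k+1,S^+_\ell).
\end{equation*}
Hence $H_{k-1}$, an $r$-uniform hypergraph on $n-k+1$ vertices, must contain a copy $T_k$ of $S^+_\ell$.

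It then remains to produce $k-1$ further copies $T_{k-1},\ldots,T_1$ of $S^+_\ell$ in $H$, pairwise vertex-disjoint and disjoint from $T_k$, which together with $T_k$ will form $k$ pairwise disjoint copies and yield the desired contradiction. We will construct these in reverse order, asking $T_i$ to be centered at $v_i$ and to avoid the forbidden set $W_i:=\{v_1,\ldots,v_{i-1}\}\cup V(T_{i+1})\cup\cdots\cup V(T_k)$; by construction $v_i\notin W_i$. Producing $T_i$ is then equivalent to finding a matching of size $\ell$ in the $(r-1)$-uniform link hypergraph of $v_i$ in $H$ restricted to $[n]\setminus(W_i\cup\{v_i\})$, so by Erd\H{o}s' matching theorem it suffices to establish a lower bound of the form $d_{H_{i-1}}(v_i)>[(k-i)(1+\ell(r-1))+\ell-1]\binom{n-i-1}{r-2}$.

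The main obstacle will be this degree-control step. The naive averaging estimate $d_{H_{i-1}}(v_i)\ge r|E(H_{i-1})|/(n-i+1)$ yields only an order $(k-i)\binom{n-i-1}{r-2}$ bound, which matches the required threshold up to a multiplicative constant depending on $\ell$ and $r$. We expect to close the gap through a stability-type dichotomy: either some $v_i$ already has degree very close to the maximum $\binom{n-i}{r-1}$, in which case applying the inductive hypothesis to $H-v_i$ directly tightens the chain of inequalities and forces a contradiction, or the degree sequence is spread out evenly enough that the Duke-Erd\H{o}s bound $ex_r(\cdot,S^+_\ell)=O(n^{r-2})$ can be invoked to guarantee that enough edges survive the restriction to $H-W_i$ for the matching argument to go through. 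Carefully handling the balance between these two regimes will be the crux of the proof.
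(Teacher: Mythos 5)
Your lower-bound construction is correct, and the first half of your upper-bound argument (peeling off $k-1$ maximum-degree vertices, using the hockey-stick identity $\sum_{i=1}^{k-1}\binom{n-i}{r-1}=\binom{n}{r}-\binom{n-k+1}{r}$ to conclude that $H_{k-1}$ still has more than $ex_r(n-k+1,S^+_\ell)$ hyperedges and hence contains a copy of $S^+_\ell$) is sound. But the proof is not complete: the step you yourself flag as the crux --- producing $k-1$ further pairwise disjoint copies of $S^+_\ell$ centered at $v_1,\ldots,v_{k-1}$ --- is only gestured at, and the gesture does not close. The obstruction is exactly the one you name: averaging gives $d_{H_{i-1}}(v_i)\gtrsim (k-i)\frac{r}{r-1}\binom{n}{r-2}$ with an absolute constant, while your matching criterion in the link of $v_i$ needs a constant growing linearly in $\ell$, so for $\ell$ large the guaranteed degree sits strictly below the required threshold. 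Your proposed repair --- a dichotomy between ``some $v_i$ has degree very close to $\binom{n-i}{r-1}$'' and ``the degrees are spread out, so invoke Duke--Erd\H{o}s'' --- is not carried out and, as stated, does not work: Duke--Erd\H{o}s produces a copy of $S^+_\ell$ \emph{somewhere} in a dense hypergraph, not one centered at a prescribed vertex $v_i$ and avoiding a prescribed vertex set, which is what your scheme requires.

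The paper resolves precisely this difficulty (in the proof of Theorem~\ref{ch1:thm1}, of which this statement is the special case $d_1=\cdots=d_k=\ell$) by a different dichotomy, on $\Delta(\mathcal{H})$ with threshold of order $n^{r-1}$ rather than $n^{r-2}$. If $\Delta(\mathcal{H})<\frac{1}{C}\bigl(\binom{n}{r}-\binom{n-k+1}{r}\bigr)$ for $C=C(k,\ell,r)$ the number of vertices in $k-1$ disjoint expanded stars, then one takes a \emph{maximal} family of disjoint copies of $S^+_\ell$, deletes its constantly many vertices together with all incident hyperedges (fewer than $\binom{n}{r}-\binom{n-k+1}{r}$ of them), and finds one more disjoint copy among the surviving $>ex_r(n-k+1,S^+_\ell)$ hyperedges, contradicting maximality; no centering at specified vertices is needed. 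Otherwise $\Delta(\mathcal{H})=\Omega(n^{r-1})$, and one deletes a \emph{single} maximum-degree vertex $u$, applies induction on $k$ to $\mathcal{H}-u$, and observes that only $O(n^{r-2})$ of the $\Omega(n^{r-1})$ hyperedges at $u$ meet the resulting $(k-1)\cdot S^+_\ell$, so Duke--Erd\H{o}s applied to the remaining hyperedges at $u$ yields the last disjoint star for free. The idea you are missing is that the high-degree alternative must be triggered at scale $n^{r-1}$, one vertex per induction step, so that the hyperedges to be avoided are of strictly lower order; removing all $k-1$ vertices at once forces you to fight a losing battle over constants at scale $n^{r-2}$. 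As written, your argument has a genuine gap at its central step.
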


In our first new main result, we generalize the above result by obtaining the Tur\'{a}n number of 
the expansion of a general star forest. We postpone our proof of the next result to Section~\ref{sec-thm1}. 
\begin{theorem}\label{ch1:thm1}
Fix integers $d_1\ge d_2\ge \cdots \ge d_k$, $k\ge 1$ and $r\ge 2$. Then for 
sufficiently large~$n$, 
   \begin{equation*}
     ex_r\bigg(n,\bigcup\limits_{i=1}^k S^+_{d_i}\bigg)=\mathop{max}\limits_{1\le i\le k}\Bigg\{\binom{n}{r}-\binom{n-i+1}{r}+ex_r\Big(n-i+1,S^+_{d_i}\Big)\Bigg\}.
   \end{equation*}
 \end{theorem}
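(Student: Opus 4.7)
The proof splits into a lower bound (by construction) and an upper bound (by induction on $k$).

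\textbf{Lower bound.} For each $i\in\{1,\ldots,k\}$, I would construct a hypergraph $H_i$ on $V=A_i\sqcup B_i$ with $|A_i|=i-1$ and $|B_i|=n-i+1$ as follows: include every $r$-subset of $V$ meeting $A_i$ (contributing $\binom{n}{r}-\binom{n-i+1}{r}$ hyperedges) and put an extremal $S^+_{d_i}$-free $r$-uniform hypergraph on $B_i$ (contributing $ex_r(n-i+1,S^+_{d_i})$ hyperedges). To check that $H_i$ is $\bigcup_{j=1}^{k}S^+_{d_j}$-free, note that in any vertex-disjoint embedding, the $k$ stars use at most $|A_i|=i-1$ vertices of $A_i$ in total, so among the $i$ largest stars $S^+_{d_1},\ldots,S^+_{d_i}$ at least one lies entirely inside $B_i$; since each of these stars contains $S^+_{d_i}$ (as $d_1\ge\cdots\ge d_i$), this contradicts the $S^+_{d_i}$-freeness of $B_i$. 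Taking the maximum over $i$ gives the claimed lower bound.

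\textbf{Upper bound.} I would proceed by induction on $k$; the base $k=1$ is the definition of $ex_r(n,S^+_{d_1})$. For the inductive step, let $H$ be a $\bigcup_{j=1}^{k}S^+_{d_j}$-free hypergraph on $n$ vertices, set a threshold $T:=C\,n^{r-2}$ for a sufficiently large constant $C=C(d_1,\ldots,d_k,r)$, and write $V_{\text{big}}:=\{v\in V(H):\deg_H(v)\ge T\}$ with $s:=|V_{\text{big}}|$. The analysis splits according to whether $s\ge k$ or $s\le k-1$.

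If $s\ge k$, I would derive a contradiction by embedding $\bigcup_{j=1}^{k}S^+_{d_j}$ into $H$. Pick any $v_1,\ldots,v_k\in V_{\text{big}}$ and build the stars sequentially, the $j$-th centered at $v_j$. At step $j$ the forbidden vertex set---the other chosen centers together with the leaves of the previously constructed stars---has bounded size $c=c(d_1,\ldots,d_k,r)$ independent of $n$. Since $\deg_H(v_j)\ge T$, the link $L_H(v_j)$ has at least $T$ hyperedges, and choosing $C$ large enough makes $T$ exceed $ex_{r-1}(n-1,M_{d_j+c})$, so by Erd\H{o}s' matching theorem $L_H(v_j)$ contains a matching of size $d_j+c$; deleting the at most $c$ matching edges meeting the forbidden set leaves a matching of size $d_j$, which together with $v_j$ gives a copy of $S^+_{d_j}$ vertex-disjoint from the previous stars.

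If $s\le k-1$, set $H':=H-V_{\text{big}}$ on $n-s$ vertices. I would claim $H'$ is $\bigcup_{j=s+1}^{k}S^+_{d_j}$-free: otherwise any such copy inside $H'$ extends, via the same Erd\H{o}s-matching step applied at each $v\in V_{\text{big}}$, to a copy of $\bigcup_{j=1}^{k}S^+_{d_j}$ in $H$, which is forbidden. Applying the inductive hypothesis to $H'$ (with $k-s\le k-1$ stars $S^+_{d_{s+1}},\ldots,S^+_{d_k}$) yields
\[|E(H')|\le\max_{1\le i'\le k-s}\Big\{\binom{n-s}{r}-\binom{n-s-i'+1}{r}+ex_r(n-s-i'+1,S^+_{d_{s+i'}})\Big\},\]
and combining with the at most $\binom{n}{r}-\binom{n-s}{r}$ hyperedges of $H$ meeting $V_{\text{big}}$, then reindexing $i=s+i'\in\{s+1,\ldots,k\}$, gives the desired upper bound.

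The \textbf{main obstacle} is calibrating the threshold $T$ so that both cases work simultaneously: $T$ must be large enough that any $s\le k$ vertices of degree $\ge T$ can host vertex-disjoint stars of sizes $d_1,\ldots,d_s$ while avoiding arbitrary constant-size forbidden sets, but one has to accept that high codegrees in $H$ are allowed and cannot be ruled out. The ``oversized matching then prune'' trick---finding a matching of size $d_j+c$ in $L_H(v_j)$ and then discarding the at most $c$ matching edges that hit the forbidden set---bypasses any codegree obstruction, at the cost of requiring $n$ to be sufficiently large relative to $d_1,\ldots,d_k,r$; the Duke--Erd\H{o}s bound $ex_r(m,S^+_{\ell})=O_r(\ell^2 m^{r-2})$ is then used to keep the $ex_r$ terms of smaller order than the binomial-coefficient terms when comparing different values of $i$ in the maximum.
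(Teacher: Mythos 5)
Your lower-bound construction and its verification coincide with the paper's and are correct. Your upper bound is a recognizable variant of the paper's induction on $k$, with two differences: you remove the whole set $V_{\text{big}}$ of high-degree vertices in one step (the paper removes a single maximum-degree vertex per inductive step), and your degree threshold is $\Theta(n^{r-2})$ with the stars at high-degree vertices built via Erd\H{o}s' matching theorem in the links (the paper uses a $\Theta(n^{r-1})$ threshold and Duke--Erd\H{o}s). The case $s\ge k$ and the case $1\le s\le k-1$ are both sound as you describe them; in particular the ``oversized matching then prune'' step is a correct way around codegree obstructions.

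The gap is the subcase $s=0$, which your write-up silently excludes: there $H'=H$, and ``applying the inductive hypothesis with $k-s\le k-1$ stars'' means applying the statement being proved to the same $k$ stars on the same hypergraph, which is circular. You cannot rule $s=0$ out: for $k\ge 2$ the hypothesis $|E(H)|>\max_i\{\cdots\}$ only forces $|E(H)|>\binom{n-1}{r-1}$, i.e.\ average degree $\Theta(n^{r-2})$, and a near-regular hypergraph with that many hyperedges has every degree below $Cn^{r-2}$ once $C$ exceeds a fixed constant --- and your argument requires $C$ to be large. This missing case needs its own argument (it is the analogue of the paper's Case 1): when $k\ge 2$ and every degree is at most $Cn^{r-2}$, embed the stars greedily one at a time, using that deleting the $O(1)$ vertices of a star destroys only $O(n^{r-2})$ hyperedges, while $\Omega(n^{r-1})$ hyperedges are present and $ex_r(m,S^+_{\ell})=O(m^{r-2})$ by Duke--Erd\H{o}s, so after each deletion more than $ex_r(\cdot,S^+_{d_j})$ hyperedges remain and the next star can be found. (This, rather than ``comparing different values of $i$ in the maximum'', is where the order-of-magnitude gap between the binomial terms and the star Tur\'{a}n numbers is actually needed.) With that case supplied, your proof is complete.
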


For our next main result, we turn to another hypergraph setting, which is based on the notion of a linear hypergraph. 
An $r$-uniform hypergraph is called linear if every pair of hyperedges of it shares at most one vertex.
It is easy to see that each expansion is a linear hypergraph. Hence, we can consider corresponding problems among linear hypergraphs. Let $F$ be a linear $r$-uniform hypergraph. 
Then the linear Tur\'{a}n number of $F$, denoted by $ex^{lin}_r(n,F)$, is the maximum number of hyperedges in an $r$-uniform $n$-vertex $F$-free linear hypergraph. 
A seminal result in this context settles the problem of determining the linear Tur\'{a}n number $ex^{lin}_3(n,C^+_3)$ of 
the expansion of $C_3$, which is equivalent to the famous (6,3)-problem. 
Motivated by this result, more and more papers on this topic are starting to emerge (See, {\it e.g.}, \cite{CGJ18, EGM19, EGM21, FG20, GCS23, GMV19, GRS22, GS21, LV03, S23, TWZZ22, CT17}). 

In 2022, Khormali and Palmer \cite{KP22} determined the linear Tur\'{a}n number of $k\cdot S^+_l$ asymptotically for sufficiently large $n$.
 \begin{theorem}[Khormali and Palmer, \cite{KP22}]\label{ch1:KP2}
 Fix integers $\ell,k\ge 1$ and $r\ge 2$. Then for sufficiently large $n$,
 $$
 ex^{lin}_r(n,k\cdot S^+_\ell)\le \bigg(\frac{\ell-1}{r}+\frac{k-1}{r-1}\bigg)(n-k+1)+\frac{\binom{k-1}{2}}{\binom{r}{2}}.
 $$
 Moreover, this bound is sharp asymptotically.
 \end{theorem}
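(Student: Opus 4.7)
The plan is to prove the upper bound via a structural lemma plus a linear-programming-style edge count, and to provide a matching extremal construction for asymptotic sharpness.

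First, I would establish the key structural lemma: if $H$ is a linear $r$-uniform $n$-vertex $k\cdot S^+_\ell$-free hypergraph, then for $n$ sufficiently large there is a set $S\subseteq V(H)$ with $|S|\le k-1$ such that $H-S$ has maximum degree at most $\ell-1$. My approach would be greedy: iteratively add to $S$ a vertex of currently highest degree, stopping once every vertex of $H-S$ has degree at most $\ell-1$. The $k\cdot S^+_\ell$-freeness forces the process to terminate within $k-1$ steps: if it reached a $k$-th step, then at each of the $k$ stages the newly chosen center would have degree at least $\ell$ in the current subhypergraph, and using linearity plus the fact that only $O(k\ell r)$ vertices are already occupied by the stars previously constructed, for $n$ large one can greedily select $\ell$ fresh edges through the new center, producing $k$ vertex-disjoint copies of $S^+_\ell$ in $H$ --- a contradiction.

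Given such an $S$, one may assume $|S|=k-1$ (by adding arbitrary extra vertices if needed). I would then partition $E(H)$ by $s=|e\cap S|$ and write $f_s$ for the number of edges $e$ with $|e\cap S|=s$, $s=0,1,\ldots,r$. Three inequalities follow at once. (a) $rf_0\le(\ell-1)(n-k+1)$, from double-counting $T$-degrees in the subhypergraph $H-S$, where $T=V(H)\setminus S$. (b) For each $v\in S$, the edges through $v$ are pairwise vertex-disjoint outside $v$ by linearity, so the $T$-slots they consume fit inside $|T|=n-k+1$; summing over $v\in S$ gives $(r-1)f_1+\sum_{s\ge 2}s(r-s)f_s\le(k-1)(n-k+1)$. (c) Each pair of vertices of $S$ lies in at most one edge of $H$, whence $\sum_{s\ge 2}\binom{s}{2}f_s\le\binom{k-1}{2}$.

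I would then take the linear combination $\tfrac{1}{r}\cdot(a)+\tfrac{1}{r-1}\cdot(b)+\tfrac{1}{\binom{r}{2}}\cdot(c)$. A direct calculation shows that on the left side the coefficient of both $f_0$ and $f_1$ equals $1$, while the coefficient of $f_s$ for $s\ge 2$ simplifies to $s(r+1-s)/r$; one readily checks that $s(r+1-s)\ge r$ for every $s\in\{2,\ldots,r\}$ and every $r\ge 2$. The combined inequality therefore dominates $\sum_s f_s=|E(H)|$ on the left and equals $\bigl(\tfrac{\ell-1}{r}+\tfrac{k-1}{r-1}\bigr)(n-k+1)+\binom{k-1}{2}/\binom{r}{2}$ on the right, yielding the claimed bound. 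For asymptotic sharpness I would exhibit the natural construction on $S\sqcup T$ with $|S|=k-1$: a near-Steiner system of $r$-edges inside $S$ covering each pair, edges from every $v\in S$ of the form $\{v\}$ together with $r-1$ vertices of $T$ partitioning $T$, and a near-$(\ell-1)$-regular linear $r$-uniform hypergraph on $T$; the three edge totals match the three terms of the bound up to unavoidable $o(n)$ divisibility slack, and the hypergraph is $k\cdot S^+_\ell$-free because removing $S$ leaves maximum degree $\ell-1$, so every $S^+_\ell$-copy must contain a vertex of $S$ as its center, and there are only $k-1$ such vertices.

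I expect the main obstacle to be the structural lemma: extracting a maximum vertex-disjoint family of $S^+_\ell$-copies naively yields only a hitting set of size $O(k\ell r)$, and reducing to $|S|\le k-1$ requires the careful, $n$-large greedy argument outlined above, exploiting linearity at every step to bound the number of edges through the new center that are blocked by previously chosen stars.
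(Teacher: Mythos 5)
Your edge-count is sound and, after unwinding, coincides with the count in the paper's proof of the more general Theorem~\ref{ch1:thm2} (of which the present statement is the case $d_1=\cdots=d_k=\ell$): the paper bounds $|E(\mathcal{H}')|$, $|E(A,B)|$ and the edges inside $A$ separately and adds them, while your linear combination with coefficients $\frac{1}{r},\frac{1}{r-1},\frac{1}{\binom{r}{2}}$ and the identity $\frac{s(r-s)}{r-1}+\frac{s(s-1)}{r(r-1)}=\frac{s(r+1-s)}{r}\ge 1$ is the same count, cleanly packaged. Your construction is also the right shape, though you gloss over why the $k-1$ perfect matchings of $(r-1)$-sets attached to the vertices of $S$ are linear-compatible with each other and with the $(\ell-1)$-regular linear hypergraph on $T$; this is precisely what the paper's Cartesian product $[r-1]^{k-1}\times[r]^{\ell-1}$ is designed to guarantee.

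The genuine gap is your structural lemma, which is false as stated. Take $k=\ell=2$, $r=3$, and let $H$ be the Fano plane padded with $n-7$ isolated vertices. This is linear and $2\cdot S^+_2$-free, since a single copy of $S^+_2$ already spans $5$ of the $7$ non-isolated vertices; yet for every vertex $v$ the four lines avoiding $v$ still pairwise intersect, so $H-v$ contains a vertex of degree $2=\ell$. Hence no $S$ with $|S|\le k-1=1$ reduces the maximum degree to $\ell-1$. The same example exposes the flaw in your greedy argument: a newly chosen center is only guaranteed degree at least $\ell$ in the current subhypergraph, while linearity bounds the number of its edges blocked by the previously built stars only by the number of occupied vertices, which is $\Theta(k\ell r)\ge\ell$; so all $\ell$ of its edges can be blocked, and ``$n$ large'' does not help because the center's degree does not grow with $n$. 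The repair is the paper's two-tier argument: let $A$ be the set of vertices of degree at least a large constant $D=D(\ell,k,r)$; for these the blocking argument does work (each loses at most $O(k\ell r)<D-\ell$ edges to previously built stars), so $|A|\le k-1$; one then shows only that $H-A$ has \emph{average} degree at most $\ell-1$, via the Average Degree Lemma (Lemma~\ref{lem1}): otherwise $\Omega(n)$ vertices of $H-A$ would have degree at least $\ell$, and since $H-A$ has bounded maximum degree one can choose $k$ of them pairwise far apart and embed vertex-disjoint stars. The average-degree bound is all that your inequality (a) actually uses, so the remainder of your argument survives once the lemma is replaced by this weaker, correct version.
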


In our second main result, we determine the linear Tur\'{a}n number of a general hypergraph star forest asymptotically for sufficiently large $n$. 
 
 \begin{theorem}\label{ch1:thm2}
Fix integers $d_1\ge d_2\ge \cdots\ge d_k$, $k\ge 1$ and $r\ge 2$. Then for sufficiently large $n$, 
\begin{equation*}
ex^{lin}_r\bigg(n,\bigcup\limits_{i=1}^k S^+_{d_i}\bigg)\le \mathop{max}\limits_{1\le i\le k}\bigg\{\bigg(\frac{d_i-1}{r}+\frac{i-1}{r-1}\bigg)(n-i+1)+\frac{\binom{i-1}{2}}{\binom{r}{2}}\bigg\}.
\end{equation*}
Moreover, this bound is sharp asymptotically.
\end{theorem}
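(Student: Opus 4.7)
The plan is to prove the upper bound via a structural decomposition of $H$ and to establish asymptotic sharpness through an explicit construction.

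For the upper bound, let $H$ be a linear $\bigcup_{j=1}^k S^+_{d_j}$-free $r$-uniform hypergraph on $n$ vertices. I will identify a set $A \subseteq V(H)$ of size $i^*-1$, for some $1 \le i^* \le k$ depending on $H$, such that $H - A$ is $S^+_{d_{i^*}}$-free. Fix a threshold $T = T(k, d_1, r)$ of order $k(1+d_1(r-1))$, and iterate: starting from $A_0 = \emptyset$, at step $j$, if $H - A_{j-1}$ contains a vertex of degree at least $d_j + T$ in the subhypergraph $H - A_{j-1}$, add such a vertex to form $A_j$; otherwise stop with $i^* = j$ and $A = A_{j-1}$. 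Should the iteration complete $k$ steps, the $k$ accumulated vertices $v_1, \ldots, v_k$ admit a greedy extraction of vertex-disjoint expansions $S^+_{d_1}, \ldots, S^+_{d_k}$---the threshold $T$ being chosen precisely so that, at each step, the selected vertex retains enough edges to dodge the leaves of previously selected stars (each such leaf blocks at most one edge through $v_j$ by linearity)---contradicting the $\bigcup_{j=1}^k S^+_{d_j}$-freeness of $H$; hence $i^* \le k$. A parallel dodging argument forces $H - A$ to be $S^+_{d_{i^*}}$-free: if some $u \in V(H - A)$ centered an $S^+_{d_{i^*}}$ in $H - A$, then combining with greedy stars at $v_1, \ldots, v_{i^*-1}$ and recursively completing the forest from the leftover hypergraph would produce $\bigcup_{j=1}^k S^+_{d_j}$ in $H$.

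With $(A, i^*)$ fixed, decompose $E(H) = E_A \cup E_{\mathrm{par}} \cup E_{\mathrm{out}}$ into edges contained in $A$, edges meeting $A$ but not contained in it, and edges disjoint from $A$. Linearity forces $|E_A| \le \binom{i^*-1}{2}/\binom{r}{2}$ via pair counting inside $A$; the $S^+_{d_{i^*}}$-freeness of $H - A$ forces $|E_{\mathrm{out}}| \le (d_{i^*}-1)(n-i^*+1)/r$ through the standard degree-sum bound; and for $|E_{\mathrm{par}}|$, a short linear-programming maximization using that the ``outside vertex-slots'' of edges through any fixed $v \in A$ are pairwise distinct (and number at most $n-i^*+1$) yields $|E_{\mathrm{par}}| \le (i^*-1)(n-i^*+1)/(r-1)$. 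Summing the three bounds gives the theorem's bound at index $i^*$, which is dominated by the maximum over $i$.

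For asymptotic sharpness, for each fixed $i \in [1,k]$ I construct $H_i$ by fixing ``centers'' $v_1, \ldots, v_{i-1}$, placing an asymptotically extremal $S^+_{d_i}$-free linear hypergraph with $(d_i-1)(n-i+1)/r - o(n)$ edges on the remaining $n-i+1$ vertices, attaching $(1-o(1))(n-i+1)/(r-1)$ linear hyperedges through each $v_\ell$ into the remaining vertex set via a near-perfect packing, and finally adding a Steiner-type collection of up to $\binom{i-1}{2}/\binom{r}{2}$ hyperedges inside $\{v_1, \ldots, v_{i-1}\}$. To verify $\bigcup_{j=1}^k S^+_{d_j}$-freeness: any such forest has $k$ disjoint centers, of which at most $i-1$ lie in $\{v_1, \ldots, v_{i-1}\}$, so by pigeonhole some index $j_0 \in \{1, \ldots, i\}$ has its star centered outside $A$; since $d_{j_0} \ge d_i$ but the outside hypergraph has maximum degree less than $d_i$, this is impossible. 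The main obstacle will be the recursive extraction establishing $S^+_{d_{i^*}}$-freeness of $H-A$ rather than only bounded maximum degree, which requires $T$ to be calibrated at each level of recursion so that greedy extractions remain possible when $n$ is sufficiently large.
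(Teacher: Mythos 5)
Your decomposition of $E(H)$ into the three classes and the bounds on $|E_A|$ and $|E_{\mathrm{par}}|$ match what the paper does (the paper bounds the edges meeting both $A$ and $B$ by double-counting pairs $(h,\{u,v\})$ with $u\in A\cap h$ and $v\in B\cap h$, which is your linear-programming step in disguise), and the sharpness construction is in the right spirit, though the paper realizes it explicitly via Cartesian products of lattice hypergraphs $[r]^d$. The genuine gap is the claim that $H-A$ is $S^+_{d_{i^*}}$-free. In a linear hypergraph any $\ell$ hyperedges through a common vertex pairwise meet only in that vertex, so $S^+_{\ell}$-freeness is equivalent to maximum degree at most $\ell-1$; your claim therefore asserts that $H-A$ has maximum degree at most $d_{i^*}-1$, and this is false no matter how $T$ is calibrated. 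Take $k=2$, $d_1=d_2=3$, $r=3$, and let $H$ consist of exactly three hyperedges through one vertex $u$, pairwise meeting only in $u$. Then no vertex has degree at least $d_1+T$, so $A=\emptyset$ and $i^*=1$, yet $H-A=H$ contains an $S^+_{3}$ centered at $u$ while $H$ is $(S^+_3\cup S^+_3)$-free: the ``recursive completion from the leftover hypergraph'' that your dodging argument invokes has nothing left to complete with. A single $S^+_{d_{i^*}}$ in $H-A$ together with the $i^*-1$ stars rooted in $A$ accounts for only $i^*$ of the $k$ required stars, and the remaining $k-i^*$ need not exist anywhere in $H$.

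What is true, and what the paper proves instead, is that the \emph{average} degree of $H-A$ is at most $d_{i^*}-1$, which already yields the same bound on $|E_{\mathrm{out}}|$ by the degree sum. The paper's route is an Average Degree Lemma: since $H-A$ has bounded maximum degree, average degree exceeding $d_{i^*}-1$ forces $\Omega(n)$ vertices of degree at least $d_{i^*}$, and from linearly many such vertices one can pick $k-i^*+1$ of them pairwise far apart and greedily extract vertex-disjoint copies of $S^+_{d_{i^*}},\dots,S^+_{d_k}$ (using $d_{i^*}\ge d_{i^*+1}\ge\cdots\ge d_k$), which together with the stars rooted at $A$ produces the forbidden forest. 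So the hypothesis you should contradict is not ``some vertex of $H-A$ has degree at least $d_{i^*}$'' but ``$\Omega(n)$ vertices of $H-A$ have degree at least $d_{i^*}$''; with that replacement the rest of your argument goes through. A smaller point: in the sharpness verification, a star centered outside $\{v_1,\dots,v_{i-1}\}$ may still use attached edges that pass through some $v_\ell$, so the argument should be that each of the first $i$ stars must consume a distinct vertex of the center set, which is impossible since it has only $i-1$ vertices.
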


We postpone our proof of the above result to Section~\ref{sec-thm2}. 
In the following, we turn to a third hypergraph setting, as defined by Gerbner and Palmer~\cite{GP17}. Let $F$ be a graph. An $r$-uniform hypergraph $\mathcal{H}$ is called  
a Berge-$F$ if there is an injection $f: V(F)\rightarrow V(\mathcal{H})$ 
and a bijection $f':E(F)\rightarrow E(\mathcal{H})$ such that for every edge $uv\in E(F)$ we have $\{f(u),f(v)\}\subseteq f'(uv)$. For a fixed graph $F$, we can 
observe that there are many different hypergraphs that are a Berge-$F$, and that 
a fixed hypergraph $\mathcal{H}$ can be a Berge-$F$ for more than one graph $F$. For a short survey of extremal results for Berge-$F$-free hypergraphs see Subsection 5.2.2 in~\cite{GP18}.

In 2022, by considering distinct values of $r,k,\ell$ in the following two results, 
Khormali and Palmer \cite{KP22} determined the Tur\'{a}n number of a Berge-$k\cdot S_\ell$ 
 asymptotically for sufficiently large $n$. 
 
\begin{theorem}[Khormali and Palmer, \cite{KP22}]
Fix integers $\ell,k\ge 2$ and $r\ge \ell+k-1$. Then for sufficiently large $n$, 
\begin{equation*}
ex_r(n,\mbox{Berge-}k\cdot S_\ell)\le \frac{\ell-1}{r-k+1}(n-k+1).
\end{equation*}
Moreover, this bound is sharp whenever $r-k+1$ divides $n-k+1$.
\end{theorem}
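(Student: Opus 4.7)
The plan is to combine a structural lemma (bounding high-degree vertices) with a degree double-count, and then match the bound with an explicit construction.

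Let $H$ be a Berge-$k\cdot S_\ell$-free $r$-uniform hypergraph on $n$ vertices, and set $X:=\{v\in V(H):d_H(v)\ge\ell\}$. The structural lemma I aim to prove is: for $n$ sufficiently large, if $|E(H)|>\frac{(\ell-1)(n-k+1)}{r-k+1}$, then $|X|\le k-1$. Granting the lemma, the upper bound follows from a standard double-count: enlarge $X$ arbitrarily to a set $X'$ of exactly $k-1$ vertices. Then every hyperedge satisfies $|e\cap(V\setminus X')|\ge r-k+1$, and
\[
(r-k+1)|E(H)|\;\le\;\sum_{e\in E(H)}|e\cap(V\setminus X')|\;=\;\sum_{v\in V\setminus X'}d_H(v)\;\le\;(\ell-1)(n-k+1),
\]
using $d_H(v)\le\ell-1$ for every $v\in V\setminus X'\subseteq V\setminus X$. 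This contradicts the hypothesis of the lemma, closing the argument.

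I would prove the structural lemma by contradiction. Assume $v_1,\ldots,v_k\in X$ and $|E(H)|>\frac{(\ell-1)(n-k+1)}{r-k+1}$ (so $|E(H)|=\Omega(n)$); I construct a Berge-$k\cdot S_\ell$ with $v_i$ as the centre of the $i$-th star in two Hall-type phases. Phase~1 selects $k\ell$ distinct hyperedges $e_{i,j}$ with $v_i\in e_{i,j}$ by applying the defect form of Hall's theorem to the bipartite incidence graph between $\ell$ copies of each $v_i$ and $E(H)$. Phase~2 chooses distinct leaves $u_{i,j}\in e_{i,j}\setminus\{v_1,\ldots,v_k\}$ via an SDR argument, exploiting $r\ge\ell+k-1$ so that $|e_{i,j}\setminus\{v_1,\ldots,v_k\}|\ge r-k\ge\ell-1$ and ensuring the partner sets are large enough. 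The hardest part is Phase~1 in the presence of heavy overlap among the neighbourhoods $N(v_i)$ in the incidence graph: the worst case is that every hyperedge contains all of $\{v_1,\ldots,v_k\}$. In that pathological configuration the hyperedges are parametrised by their $(r-k)$-subsets of $V\setminus\{v_1,\ldots,v_k\}$, so the rainbow selection reduces to finding a matching of size $k\ell$ in an $(r-k)$-uniform auxiliary hypergraph with $\Omega(n)$ edges on $n-k$ vertices, which is routine for $n$ large.

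For sharpness when $(r-k+1)\mid(n-k+1)$, fix a set $X$ of $k-1$ vertices, and on the remaining $n-k+1$ vertices place an $(\ell-1)$-regular $(r-k+1)$-uniform hypergraph $H^*$, realised for $n$ large as the edge-disjoint union of $\ell-1$ perfect matchings (which exist under the divisibility hypothesis, e.g.\ via Baranyai's theorem). The $r$-uniform hypergraph $H:=\{X\cup e^*:e^*\in E(H^*)\}$ has exactly $\frac{\ell-1}{r-k+1}(n-k+1)$ hyperedges, and every vertex of $V\setminus X$ has degree $\ell-1$ in $H$. Consequently any Berge-$S_\ell$ in $H$ must be centred in $X$, and $k$ vertex-disjoint Berge-$S_\ell$'s would require $k$ distinct centres in $X$, which is impossible as $|X|=k-1$. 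Hence $H$ is Berge-$k\cdot S_\ell$-free, matching the upper bound.
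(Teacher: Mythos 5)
Your lower-bound construction and the final double count are fine and coincide with what the paper (which proves the more general Theorem~\ref{ch1:thm3} in Section~\ref{sec-thm3}) and Khormali--Palmer do. The problem is your structural lemma, which carries the entire weight of the upper bound and whose proof has a genuine gap. You take the degree threshold to be exactly $\ell$ and try to build a Berge-$k\cdot S_\ell$ centered at an \emph{arbitrary} choice of $k$ vertices $v_1,\dots,v_k$ of degree at least $\ell$. Phase~1 (Hall on the incidence graph) requires $\bigl|\bigcup_{i\in I}E(v_i)\bigr|\ge \ell|I|$ for every $I\subseteq[k]$, and this can fail: if, say, $v_1$ and $v_2$ both have degree exactly $\ell$ and lie in the same $\ell$ hyperedges, the union has size $\ell<2\ell$, and in fact no Berge-$k\cdot S_\ell$ centered at those particular vertices exists. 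The ``worst case'' you single out --- every hyperedge contains all of $v_1,\dots,v_k$ --- is actually the \emph{easy} case for Phase~1, since then every $v_i$ has degree $|E(H)|=\Omega(n)$; the hard case is precisely low-degree centers with heavily overlapping edge sets, and your argument does not address it (one must either re-choose centers or argue globally via the edge count). Separately, your Phase~2 reduction to ``a matching of size $k\ell$ in an $(r-k)$-uniform hypergraph with $\Omega(n)$ edges'' is not routine and is false as stated when $r-k\ge 2$ (take all edges through a common vertex: $\Omega(n^{r-k-1})$ edges, matching number $1$). What a Berge star forest actually needs is a system of distinct leaf representatives, not a matching; that does follow from defect Hall together with $|E(H)|=\Omega(n)$, but it is a different argument from the one you give.

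The paper's proof avoids exactly this trap by using two degree thresholds rather than one. It first sets $A$ to be the vertices of degree exceeding a \emph{large constant} $D=D(d_1,k,r)$ and shows $|A|\le k-1$; there the greedy embedding is legitimate because a vertex of huge degree supports a Berge star avoiding any prescribed constant-sized set of hyperedges and skeleton vertices (Lemmas~\ref{lem2} and~\ref{lem3}). Only afterwards does it show (Claim~\ref{claim1}) that when $|A|=k-1$ every vertex of $B=V\setminus A$ has degree at most $\ell-1$: a degree-$\ell$ vertex of $B$ centers a Berge-$S_\ell$ whose skeleton lies in $B$ (here $r\ge \ell+k-1$ is used to shrink its hyperedges away from $A$), and this combines with $k-1$ greedily built stars at $A$ to produce the forbidden configuration. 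The case $|A|<k-1$ is handled by an average-degree argument producing $\Omega(n)$ pairwise ``far'' vertices of degree at least $\ell$. Your single-threshold lemma is (at best) only vacuously true --- it is equivalent to the theorem itself --- so you cannot prove it by the local Hall argument you sketch; to repair the proof you should adopt the two-tier classification or otherwise explain how to find usable centers when your chosen $v_i$ share almost all of their few incident hyperedges.
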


\begin{theorem}[Khormali and Palmer, \cite{KP22}]
Fix integers $\ell,k\ge 2$ and $r\le \ell+k-2$. Then for sufficiently large $n$, 
\begin{equation*}
ex_r(n,\mbox{Berge-}k\cdot S_\ell)\le \bigg(\binom{\ell +k-1}{r}-\binom{k-1}{r}\bigg)\left\lceil\frac{n-k+1}{\ell}\right\rceil+\binom{k-1}{r}.
\end{equation*}
Moreover, this bound is sharp whenever $\ell$ divides $n-k+1$.
\end{theorem}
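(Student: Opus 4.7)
My plan is to establish matching lower and upper bounds. The sharpness (when $\ell\mid n-k+1$) comes from the following construction. Fix an apex set $A$ of $k-1$ vertices and partition the remaining $n-k+1$ vertices into $m=(n-k+1)/\ell$ blocks $B_1,\ldots,B_m$ of size $\ell$. Take as hyperedges all $r$-subsets of $A$ together with, for each $j$, every $r$-subset of $A\cup B_j$ that meets $B_j$; this gives exactly $(\binom{\ell+k-1}{r}-\binom{k-1}{r})m+\binom{k-1}{r}$ hyperedges. To see the construction is Berge-$k\cdot S_\ell$-free, observe that any star whose center $c$ lies in some $B_j$ has all $\ell$ of its hyperedges inside $A\cup B_j$ (these are the only hyperedges through $c$), so at most $\ell-1$ of its leaves can lie in $B_j\setminus\{c\}$ and hence at least one leaf must be drawn from $A$. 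Since at least one of the $k$ star-centers lies outside $A$ (because $|A|=k-1<k$), and the $k$ stars are vertex-disjoint, a pigeonhole argument on the $A$-vertices used either as centers or as extra leaves of outside-centered stars produces a contradiction.

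For the upper bound I would use induction on $k$, driven by a greedy Berge-$S_\ell$ extraction at a high-degree vertex. Fix a threshold $T=T(r,k,\ell)$ large enough (to be determined in the analysis) and assume $\mathcal{H}$ is Berge-$k\cdot S_\ell$-free. If the maximum degree $\Delta(\mathcal{H})\le T$, the degree sum gives $|E(\mathcal{H})|\le Tn/r$, which is absorbed into the claimed bound for large $n$. Otherwise, pick a max-degree vertex $v$ and greedily build a Berge-$S_\ell$ centered at $v$ by choosing hyperedges through $v$ one at a time with pairwise distinct non-$v$ representatives (which is possible since $d(v)\ge T$ and each previously chosen edge uses only $r-1$ new vertices). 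Deleting the $\ell+1$ vertices of this Berge-$S_\ell$ produces a hypergraph with no Berge-$(k-1)\cdot S_\ell$, so the inductive hypothesis applies. Accounting for the deleted hyperedges (bounded by the sum of degrees of the removed vertices) and the change of the right-hand side when passing from $k-1$ to $k$ completes the count.

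The main obstacle will be the bookkeeping in this inductive step: a naive deletion of a high-degree vertex loses $\Theta(\Delta)$ hyperedges, which can far exceed the gain in the right-hand side when passing from $k-1$ to $k$. The natural remedy is to identify at the outset a bounded apex set $C$ of size at most $k-1$ containing every vertex whose degree exceeds a carefully chosen threshold. Then every vertex outside $C$ has bounded degree, the $\binom{|C|}{r}\le\binom{k-1}{r}$ hyperedges lying inside $C$ match the additive constant, and the remaining hyperedges are counted by grouping the vertices of $V\setminus C$ into blocks that mimic the extremal construction, with the ceiling $\lceil(n-k+1)/\ell\rceil$ arising from this grouping. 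Showing that every high-degree vertex must indeed lie in such a $C$ (for otherwise one could pick $k$ such vertices as independent star-centers and extend each greedily to a disjoint Berge-$S_\ell$) is the crux, and this is where the condition $r\le\ell+k-2$ enters: it guarantees the extremal block structure is tight rather than degenerate.
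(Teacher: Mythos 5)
Your sharpness construction and its Berge-$k\cdot S_\ell$-freeness argument are correct and coincide with the one used in the paper (Section~\ref{sec-thm4}, the construction $\mathcal{H}_k$): an apex set of $k-1$ vertices, blocks of size $\ell$, complete $r$-graphs on each apex-plus-block, and the observation that every skeleton of a Berge-$S_\ell$ must meet the apex set. The upper bound, however, has a genuine gap, and you have in fact located it yourself without closing it. First, the step ``if $\Delta(\mathcal{H})\le T$ then $|E(\mathcal{H})|\le Tn/r$ is absorbed into the claimed bound'' fails: the target bound is itself linear in $n$ with the \emph{exact} constant $\frac{1}{\ell}\bigl(\binom{\ell+k-1}{r}-\binom{k-1}{r}\bigr)$, while $T$ must be taken large (at least of order $\binom{\ell+k-1}{r-1}$, by Lemma~\ref{lem3}, to force a Berge-$S_\ell$ at a vertex of degree $T$), so $Tn/r$ generally exceeds the right-hand side. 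Second, in the inductive step on $k$, an apex vertex can have degree $\Theta(n)$ in a Berge-$k\cdot S_\ell$-free hypergraph, so deleting the $\ell+1$ skeleton vertices destroys $\Theta(n)$ hyperedges, while the right-hand side drops by $\bigl(\binom{\ell+k-2}{r-1}-\binom{k-2}{r-1}\bigr)\frac{n}{\ell}+O(1)$; nothing in your sketch shows the loss does not exceed the gain, and with a naive greedy extraction it typically does.

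Your proposed remedy (an apex set $C$ of size at most $k-1$ plus ``grouping $V\setminus C$ into blocks that mimic the extremal construction'') is precisely where the entire proof lives, and it is not carried out. Establishing $|C|\le k-1$ is the easy part. The hard part, which the paper handles in Subcase~1.1 of Section~\ref{sec-thm4}, is twofold: (i) a uniform degree bound $\binom{\ell+k-2}{r-1}$ on every non-apex vertex, matching the degree of block vertices in the extremal construction; and (ii) showing that no apex vertex of $\mathcal{H}$ can have degree exceeding that of an apex vertex of $\mathcal{H}_k$. Point (ii) is proved by induction on the uniformity $r$ (not on $k$): one decomposes the link of an apex vertex $y$ into the trace multisets $E^y_j=\{e\setminus A: y\in e,\ |e\setminus A|=j\}$, compares multiplicities $\binom{k-2}{r-1-j}$ with those in the extremal construction, and uses the bound $ex_j(n-k+1,\mbox{Berge-}S_\ell)\le\binom{\ell}{j}\frac{n-k+1}{\ell}$ for the lower-uniformity hypergraphs $\mathcal{G}$ to extract a Berge-$S_\ell$ with skeleton inside $B$ whenever some $|E^y_j|$ is too large, which then extends to a forbidden Berge-$k\cdot S_\ell$. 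There is no block decomposition of $V\setminus C$ at all; the ceiling $\lceil(n-k+1)/\ell\rceil$ comes from this degree-by-degree comparison with $\mathcal{H}_k$, not from a partition of $\mathcal{H}$. Without an argument of this kind (or some substitute achieving the exact constant), your outline does not yield the stated bound.
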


With our final two main results, we generalize 
the above results to more general Berge-star forests. We postpone our proofs to Section~\ref{sec-thm3} and \ref{sec-thm4}, respectively. 

\begin{theorem}\label{ch1:thm3}
Fix integers $d_1\ge d_2\ge \cdots \ge d_k\ge 2$, $k\ge 2$ and $r\ge d_1+k-1$. Then 
for sufficiently large $n$, 
\begin{equation*}
ex_r\bigg(n,\mbox{Berge-}\bigcup\limits_{i=1}^k S_{d_i}\bigg)\le \mathop{max}\limits_{1\le s\le k-1} \bigg\{\frac{d_{s+1}-1}{r-s}(n-s)\bigg\}.
\end{equation*}
Moreover, this bound is sharp whenever $r-s$ divides $n-s$ for $1\le s\le k-1$.
\end{theorem}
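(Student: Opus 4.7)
My plan handles the sharpness (lower bound) by explicit construction and the upper bound by an iterative ``core-growing'' procedure, with the main technical work lying in the Berge-star-forest extraction step at the end.

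For the sharpness, given $s \in \{1,\ldots,k-1\}$ with $(r-s)\mid(n-s)$, I would fix an $s$-element core $W \subseteq V$ and any $(d_{s+1}-1)$-regular $(r-s)$-uniform hypergraph $\mathcal{L}$ on the remaining $n-s$ vertices (which exists for $n$ large once the divisibility condition is satisfied), and take $\mathcal{H}_s = \{\,W \cup L : L \in E(\mathcal{L})\,\}$. This has exactly $\tfrac{(d_{s+1}-1)(n-s)}{r-s}$ hyperedges. Since every vertex outside $W$ has $\mathcal{H}_s$-degree equal to $d_{s+1}-1$, any Berge-$S_{d_i}$ with $d_i \ge d_{s+1}$ must be centered in $W$. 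Applied to the $s+1$ pairwise distinct centers $c_1,\ldots,c_{s+1}$ of any hypothetical Berge-$\bigcup_{i=1}^k S_{d_i}$, this would force $s+1$ distinct vertices into the $s$-element set $W$, a contradiction. Hence $\mathcal{H}_s$ is Berge-$\bigcup_{i=1}^k S_{d_i}$-free.

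For the upper bound I would run an iterative ``core-growing'' procedure. Starting from $V^{(0)}=\emptyset$, at step $j \ge 1$ I examine the $(r-j+1)$-uniform link $\mathrm{Lk}(V^{(j-1)})$ of $V^{(j-1)}$ in $\mathcal{H}$ (the collection of $L \subseteq V\setminus V^{(j-1)}$ with $|L|=r-j+1$ and $V^{(j-1)}\cup L \in E(\mathcal{H})$). If some vertex of the link has link-degree at least $d_j$, I select a highest-link-degree such vertex $v_j$ and form $V^{(j)}:=V^{(j-1)}\cup\{v_j\}$; otherwise I stop with $s:=j-1$. If the procedure halts with some $s \in \{1,\ldots,k-1\}$, then $\mathrm{Lk}(V^{(s)})$ has maximum degree at most $d_{s+1}-1$, immediately bounding the hyperedges of $\mathcal{H}$ that contain $V^{(s)}$ by $\tfrac{(d_{s+1}-1)(n-s)}{r-s}$; a subsidiary charging argument --- exploiting the fact that each $v_j$ was a max-degree vertex in its link, so every other link vertex has comparable degree --- bounds the hyperedges of $\mathcal{H}$ that miss some vertex of $V^{(s)}$ by a lower-order correction absorbed into the ``sufficiently large $n$'' clause. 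If the procedure instead reaches $j=k$, the selected vertices $v_1,\ldots,v_k$ allow the extraction of a Berge-$\bigcup_{i=1}^k S_{d_i}$ in $\mathcal{H}$, contradicting the hypothesis.

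The principal technical obstacle is that last extraction step: for each $j$, constructing a Berge-$S_{d_j}$ centered at $v_j$ using $d_j$ hyperedges of $\mathcal{H}$ that contain $V^{(j)}$ together with $d_j$ distinct leaves drawn from $V\setminus V^{(k)}$, and with all $\sum_i d_i$ leaves globally distinct across the $k$ stars. Each candidate hyperedge for star $j$ contains $V^{(j)}$ plus $r-j$ ``new'' vertices, at most $k-j$ of which may be among the other centers $v_{j+1},\ldots,v_k$, leaving only $\ge r-k$ truly leaf-eligible positions; under the minimal assumption $r \ge d_1+k-1$ this need not admit a naive Hall-type selection, which would require roughly $r \ge d_1 + (k-1)(d_1+1)$. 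I expect this gap to be closed by greedy leaf choice, using the fact that the number of candidate hyperedges for star $j$ whose leaf-eligible portion lies entirely inside any fixed constant-size ``blocking set'' of previously used leaves is bounded by a binomial coefficient $\binom{O(1)}{r-j}$ depending only on $k,d_1,r$. Iterating this observation --- discarding blocked candidates at each step while exploiting that the distinctness of hyperedges across stars is automatic from the nested containments $V^{(j)}\supset V^{(j-1)}$ --- should produce the $k$ disjoint Berge-stars, completing the argument.
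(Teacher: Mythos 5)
Your lower-bound construction is correct and coincides with the paper's. The upper bound, however, has two genuine gaps, and the paper's argument is structured precisely to avoid both of them.

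First, when your core-growing procedure halts at some $s\le k-1$, you only control the hyperedges that contain \emph{all} of $V^{(s)}$; the hyperedges missing some $v_j$ are waved away as ``a lower-order correction absorbed into the sufficiently large $n$ clause.'' This cannot work: the claimed bound $\frac{d_{s+1}-1}{r-s}(n-s)$ is exact and itself only linear in $n$, so there is no room to absorb any positive additive error, however small. Moreover, nothing in your argument shows these leftover hyperedges are few --- the core is chosen greedily and locally, and the high-degree vertices of a free hypergraph need not lie on a common hyperedge, so $\Theta(n)$ hyperedges can avoid any particular $s$-set. The paper sidesteps this entirely: it takes $A$ to be the set of \emph{all} vertices of degree exceeding a large constant $D=D(d_1,k,r)$, proves $|A|\le k-1$ and that $B=V(\mathcal{H})\setminus A$ has average degree at most $d_{|A|+1}-1$ (Claim~\ref{claim1}), and then uses the handshake identity $r|E(\mathcal{H})|=\sum_{u\in A}d(u)+\sum_{v\in B}d(v)\le |A|\,|E(\mathcal{H})|+d(B)(n-|A|)$, which charges every hyperedge exactly $r$ times and leaves nothing unaccounted. (Both you and the paper treat only $|A|\ge 1$; if $A=\emptyset$ the same computation yields $\frac{d_1-1}{r}n$, a term absent from the stated maximum, so this boundary case deserves a word in any event.)

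Second, the extraction step when the procedure reaches $j=k$ is false as stated. Take $\mathcal{H}$ to consist of exactly $d_1$ hyperedges, all containing a fixed $k$-set $\{w_1,\dots,w_k\}$ (possible since $r\ge d_1+k-1$): every $w_j$ has link-degree $d_1\ge d_j$ at every stage, so your procedure runs all the way to $j=k$, yet $\mathcal{H}$ has only $d_1<\sum_{i=1}^k d_i$ hyperedges and hence contains no Berge-$\bigcup_{i=1}^k S_{d_i}$ whatsoever. The root cause is your assertion that ``distinctness of hyperedges across stars is automatic from the nested containments'': the nesting works against you, since every candidate hyperedge for star $j$ contains $V^{(j)}\supseteq V^{(j')}$ and is therefore also a candidate for every earlier star $j'$, so the same $d_j$ hyperedges can be claimed by all $k$ stars simultaneously. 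To extract $k$ hyperedge-disjoint Berge-stars with pairwise vertex-disjoint skeletons one needs each chosen center to have degree exceeding a large constant, so that it retains enough hyperedges after deleting the hyperedges and skeleton vertices consumed by the previously built stars (this is how the paper's first claim and Claim~\ref{claim1} proceed, via Lemmas~\ref{lem2} and~\ref{lem3}); your thresholds $d_j$ are far too weak to survive those deletions.
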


\begin{theorem}\label{ch1:thm4}
Fix integers $d_1\ge d_2\ge \cdots \ge d_k\ge 2$, $k\ge 2$ and $r\le d_1+k-2$. Then 
for sufficiently large $n$, 
\begin{equation*}
\begin{aligned}
ex_r\bigg(n,\mbox{Berge-}\bigcup\limits_{i=1}^k S_{d_i}\bigg)&\le \mathop{max}\bigg\{\mathop{max}\limits_{1\le i\le k}\bigg\{\bigg(\binom{d_i+i-1}{r}-\binom{i-1}{r}\bigg)\left\lceil\frac{n-i+1}{d_i}\right\rceil+\binom{i-1}{r},\\
&\frac{d_i-1}{r-i+1}(n-i+1)\bigg\},\mathop{max}\limits_{1\le c<k-1}\bigg\{\frac{\mathop{max}\Big\{\binom{d_c+c}{r-1},d_c+c\Big\}}{r-c}(n-c)\bigg\}\bigg\}.
\end{aligned}
\end{equation*}
Moreover, this bound is sharp whenever $d_i$ and $r-i+1$ divide $n-i+1$ for $1\le i\le k$.
\end{theorem}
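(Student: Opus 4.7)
The plan is to prove Theorem~\ref{ch1:thm4} by an iterative greedy embedding of Berge-stars, generalizing the two Khormali--Palmer proofs for $k\cdot S_\ell$ to the setting where the star sizes $d_i$ vary. Let $\mathcal{H}$ be an $r$-uniform $n$-vertex Berge-$\bigcup_{i=1}^k S_{d_i}$-free hypergraph with $n$ large. At stage $i$, pick (if possible) a vertex $v_i\in V(\mathcal{H})\setminus\{v_1,\ldots,v_{i-1}\}$ whose degree in $\mathcal{H}_i:=\mathcal{H}-\{v_1,\ldots,v_{i-1}\}$ exceeds a threshold $T_i$ polynomial in $n$, and then extract $d_i$ hyperedges through $v_i$ whose other $r-1$ vertices avoid all previously committed ones. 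Since $T_i$ dominates the number of previously used vertices, the extraction is possible whenever such $v_i$ exists. If the procedure completes all $k$ stages, the resulting vertex-disjoint Berge-stars realize a Berge-$\bigcup_{i=1}^k S_{d_i}$, a contradiction; hence it halts at some stage $s+1\le k$.

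At the halting stage, $\mathcal{H}_{s+1}$ lives on $n-s$ vertices with maximum degree below $T_{s+1}$, and must itself be Berge-$S_{d_{s+1}}$-free (otherwise a Berge-$S_{d_{s+1}}$ inside $\mathcal{H}_{s+1}$ together with the $s$ already-committed stars would complete the forbidden forest). Feeding this into the $k=1$ specialization of the two earlier Khormali--Palmer theorems yields an upper bound of the form $ex_r(n-s,\text{Berge-}S_{d_{s+1}})+\binom{s}{r}$, where the last term accounts for hyperedges lying inside the committed centers. After re-indexing $i:=s+1$, the two regimes of the single-star bound produce the two families $(\binom{d_i+i-1}{r}-\binom{i-1}{r})\lceil(n-i+1)/d_i\rceil+\binom{i-1}{r}$ and $\frac{d_i-1}{r-i+1}(n-i+1)$ inside the outer maximum of the statement.

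The third family $\frac{\max\{\binom{d_c+c}{r-1},d_c+c\}}{r-c}(n-c)$ should arise from a refined stopping condition at stage $c+1$: the greedy procedure may halt not because residual vertices have low degree, but because some $(r-1)$-subset involving a committed center $v_j$ ($j\le c$) sits in too many hyperedges. A double count over such $(r-1)$-subsets against a swap argument (in which $v_j$ is exchanged for a residual vertex to build $S_{d_{c+1}}$) produces the bound, with the two subexpressions $\binom{d_c+c}{r-1}$ and $d_c+c$ corresponding to whether the candidate hyperedges fit inside the committed set or must borrow a fresh vertex. For sharpness, the matching constructions combine $i-1$ apex vertices joined by all hyperedges to the appropriate Khormali--Palmer extremal Berge-$S_{d_i}$-free hypergraph on the remaining $n-i+1$ vertices, with the divisibility conditions ensuring that those extremal constructions exist exactly.

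The main obstacle I anticipate is calibrating the thresholds $T_i$ so that the three cases produce exactly the formula in the statement without stray lower-order slack, and verifying that the halting condition for the third case indeed forces the claimed $(r-1)$-spine structure; this requires a delicate switching argument absent from the Khormali--Palmer proofs, where all stars have the same size and a single global threshold suffices.
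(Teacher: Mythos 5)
Your high-level plan (peel off a bounded set of high-degree ``apex'' vertices, then control what remains) matches the paper's, but the central counting step is wrong, and wrong in a way that cannot be patched within your framework. You claim that once the greedy halts at stage $s+1$, the hypergraph $\mathcal{H}_{s+1}=\mathcal{H}-\{v_1,\dots,v_s\}$ is Berge-$S_{d_{s+1}}$-free and hence $|E(\mathcal{H})|\le ex_r(n-s,\mbox{Berge-}S_{d_{s+1}})+\binom{s}{r}$, the last term ``accounting for hyperedges lying inside the committed centers.'' But the hyperedges of $\mathcal{H}$ absent from $\mathcal{H}_{s+1}$ are \emph{all} hyperedges meeting $\{v_1,\dots,v_s\}$, of which there can be as many as $s\binom{n-1}{r-1}$, not $\binom{s}{r}$, and these crossing hyperedges are the dominant contribution in the regime $r\le d_i+i-2$. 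Indeed the target term $\bigl(\binom{d_i+i-1}{r}-\binom{i-1}{r}\bigr)\lceil (n-i+1)/d_i\rceil+\binom{i-1}{r}$ exceeds $ex_r(n-i+1,\mbox{Berge-}S_{d_i})+\binom{i-1}{r}$ by a positive multiple of $n$ whenever $i\ge 2$: the extremal hypergraph consists of complete $r$-graphs on each $d_i$-block of $B$ \emph{together with all of} $A$, so almost all of its hyperedges cross between $A$ and $B$, and your decomposition never counts them.

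For comparison, the paper proceeds as follows. In the regimes where a pointwise or average degree bound on $B$ is available ($|A|=k-1$ with $r\ge d_k+k-1$, and all of $|A|=c<k-1$), it uses the handshake identity $r|E(\mathcal{H})|=\sum_{u\in A}d(u)+\sum_{v\in B}d(v)\le |A|\,|E(\mathcal{H})|+d(B)(n-|A|)$ and solves for $|E(\mathcal{H})|$; this is where the denominators $r-i+1$ and $r-c$ come from, and the third family $\max\bigl\{\binom{d_c+c}{r-1},d_c+c\bigr\}(n-c)/(r-c)$ arises simply because when $|A|=c<k-1$ the average degree of $B$ can only be capped at $\binom{d_c+c}{r-1}$ (resp.\ $d_c+c$) via the Berge-$S_{d_c+c+1}$ lemma and a ``far pairs'' argument, not at $d_{c+1}-1$ --- there is no $(r-1)$-spine or swap mechanism of the kind you speculate about. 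In the hardest subcase ($|A|=k-1$, $r\le d_k+k-2$) no handshake count suffices; the paper instead runs an induction on the uniformity $r$, comparing the degree of each apex $y\in A$ with that of an apex of the extremal construction $\mathcal{H}_k$ by projecting the link of $y$ onto $B$ (the sets $E^y_j$ with their multiplicities $\binom{k-2}{r-1-j}$) and invoking $ex_j(n-k+1,\mbox{Berge-}S_{d_k})$ for $j<r$. Your proposal contains no substitute for either mechanism. Finally, your sharpness sketch (``$i-1$ apex vertices joined by all hyperedges to'' a single-star-free hypergraph) is not even Berge-$\bigcup S_{d_i}$-free: a vertex of $B$ lying in every $r$-set through an apex has degree $\Omega(n^{r-2})$ and, for $r\ge 3$, centers a Berge-$S_{d_i}$ whose skeleton avoids $A$; the crossing hyperedges must be confined to the $d_i$-blocks as in the paper's construction.
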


The remainder of this paper is devoted to the proofs of our new contributions. We complete the paper by a short concluding section. 

\section{Proof of Theorem \ref{ch1:thm1}}\label{sec-thm1}
By Theorems \ref{ch1:LLP} and \ref{ch1:E}, the theorem holds for $r=2$ or $d_1=1$. Thus, 
it suffices to show that the theorem holds when $r\ge 3$ and $d_1\ge 2$.

For the lower bound, we can construct an extremal hypergraph $H$ as follows. Fix $i$ such that $1\le i\le k$, and let $A$ and $B$ be vertex sets of $i-1$ and $n-i+1$ vertices, respectively. Firstly, we embed an $S^+_{d_i}$-free $r$-uniform hypergraph with $ex_r(n-i+1,S^+_{d_i})$ hyperedges into $B$. Next, we add every $r$-set that is incident to $A$ to our hypergraph. Since $B$ contains no copy of $S^+_{d_i}$, each $S^+_{d_j}$ with $1\le j\le i$ must contain at least one vertex of $A$.
Therefore, $H$ is $\bigcup\limits_{i=1}^k S^+_{d_i}$-free. Thus, we can choose $i$ such that the number of hyperedges in $H$ 
attains the value in the statement of the theorem.

In the following, we consider the upper bound. Let $\mathcal{H}$ be an $r$-uniform $n$-vertex hypergraph with
\begin{equation*}
|E(\mathcal{H})|>\mathop{max}\limits_{1\le i\le k}\Bigg\{\binom{n}{r}-\binom{n-i+1}{r}+ex_r\Big(n-i+1,S^+_{d_i}\Big)\Bigg\}. 
\end{equation*}
We need to show that $\mathcal{H}$ is not $\bigcup\limits_{i=1}^k S^+_{d_i}$-free. We proceed by induction on $k$. For $k=1$, since $|E(\mathcal{H})|>ex_r(n,S^+_{d_1})$ we have $\mathcal{H}$ is not $S^+_{d_1}$-free. Let $k>1$ and assume the statement holds for $k-1$. In the following, we discuss two cases based on the maximum degree $\Delta(\mathcal{H})$ of $\mathcal{H}$.

\medskip\noindent
{\bf Case 1:}
\begin{equation*}
\Delta(\mathcal{H})<\frac{1}{\bigcup\limits_{i=1}^{k-1} ((r-1)d_i+1)}\bigg(\binom{n}{r}-\binom{n-k+1}{r}\bigg).
\end{equation*}

Assume that $\mathcal{H}$ contains a copy of $\cup_{i=1}^t S^+_{d_i}$ such that $t$ is maximal.
We claim that $t\ge k$. Otherwise, there are at most $\cup_{i=1}^{k-1} ((r-1)d_i+1)$ vertices spanned by
 $\cup_{i=1}^t S^+_{d_i}$. After removing these vertices and the incident hyperedges we have at least
\begin{equation*}
\begin{aligned}
|E(\mathcal{H})|&-\bigcup\limits_{i=1}^{k-1} ((r-1)d_i+1)\cdot \Delta(\mathcal{H})>|E(\mathcal{H})|-\bigg(\binom{n}{r}-\binom{n-k+1}{r}\bigg)\\
&>\binom{n}{r}-\binom{n-k+1}{r}+ex_r\Big(n-k+1,S^+_{d_k}\Big)-\bigg(\binom{n}{r}-\binom{n-k+1}{r}\bigg)\\
&= ex_r\Big(n-k+1,S^+_{d_k}\Big)
\end{aligned}
\end{equation*}
hyperedges. Therefore, there is a copy of $S^+_{d_k}$ that is vertex-disjoint from 
 $\bigcup\limits_{i=1}^t S^+_{d_i}$. This violates the maximality of $t$, a contradiction.

\medskip\noindent
{\bf Case 2.}
\begin{equation*}
\Delta(\mathcal{H})\ge\frac{1}{\bigcup\limits_{i=1}^{k-1} ((r-1)d_i+1)}\bigg(\binom{n}{r}-\binom{n-k+1}{r}\bigg).
\end{equation*}
Let $u$ be a vertex of maximum degree. Since $d(u)\le \binom{n-1}{r-1}$,
\begin{equation*}
\begin{aligned}
|E(\mathcal{H})|-d(u)&>\mathop{max}\limits_{1\le i\le k}\bigg\{\binom{n}{r}-\binom{n-i+1}{r}+ex_r\Big(n-i+1, S^+_{d_i}\Big)\bigg\}-\binom{n-1}{r-1}\\
&=\mathop{max}\limits_{1\le i\le k}\bigg\{\binom{n-1}{r}-\binom{n-(i-1)}{r}+ex_r\Big(n-(i-1), S^+_{d_i}\Big)\bigg\}\\
&\overset{d'_j=d_{j+1}}{\ge} \mathop{max}\limits_{1\le j\le k-1}\bigg\{\binom{n-1}{r}-\binom{n-j}{r}+ex_r\Big(n-j, S^+_{d'_j}\Big)\bigg\}\\
&=ex_r\bigg(n-1,\bigcup\limits_{i=2}^k S^+_{d_i}\bigg).
\end{aligned}
\end{equation*}
Therefore, we can consider a copy of $\bigcup\limits_{i=2}^k S^+_{d_i}$ (that we indicate as $\bigcup\limits_{i=2}^k S^+_{d_i}$ in the sequel) 
after removing $u$ from $\mathcal{H}$, and apply induction on 
the resulting hypergraph.

Next we will show that there is a copy of $S^+_{d_1}$ with center $u$ that is vertex-disjoint from  
$\bigcup\limits_{i=2}^k S^+_{d_i}$. Observe that $u$ and any vertex $v\in V(\bigcup\limits_{i=2}^k S^+_{d_i})$ are contained in at most $\binom{n-2}{r-2}$ common hyperedges. Therefore the number of hyperedges containing $u$ and a vertex of 
 $\bigcup\limits_{i=2}^k S^+_{d_i}$ is at most
\begin{equation*}
\bigcup\limits_{i=2}^k ((r-1)d_i+1)\binom{n-2}{r-2}=O(n^{r-2}).
\end{equation*}
Note that $d(u)=\Omega(n^{r-1})$. Therefore if we remove the hyperedges incident to 
 $\bigcup\limits_{i=2}^k S^+_{d_i}$, there still are $\Omega(n^{r-1})$ hyperedges incident to $u$. As $r\ge 3$ and $d_1\ge 2$, we may apply Theorem \ref{ch1:DE} to these hyperedges to get a copy of $S^+_{d_1}$ that is vertex-disjoint from  
 $\bigcup\limits_{i=2}^k S^+_{d_i}$, {\it i.e.}, $\mathcal{H}$ contains a copy of $\bigcup\limits_{i=1}^k S^+_{d_i}$, a contradiction.

Thus, the theorem holds. \qed

\section{Proof of Theorem \ref{ch1:thm2}}\label{sec-thm2} 

Before proving Theorem \ref{ch1:thm2}, we would like to introduce an average degree lemma obtained by Khormali and Palmer~\cite{KP22} that will be used in 
our proof.
\begin{lemma}[Average Degree Lemma, \cite{KP22}]\label{lem1}
Fix integers $d$ and $\Delta$ and a constant $0\le \varepsilon< 1$. If $\mathcal{H}$ is a hypergraph with average degree at least $d-\varepsilon$ and maximum degree at most $\Delta$, then the number of vertices in $\mathcal{H}$ of degree less than $d$ is at most
\begin{equation*}
\frac{\Delta-d+\varepsilon}{\Delta-d+1}n.
\end{equation*}
In particular, the number of vertices in $\mathcal{H}$ of degree at least $d$ is $\Omega(n)$.
\end{lemma}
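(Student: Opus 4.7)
The plan is to prove the lemma by a simple double-counting argument on the degree sum of $\mathcal{H}$. Write $n = |V(\mathcal{H})|$ and let $x$ denote the number of vertices of $\mathcal{H}$ of degree strictly less than $d$. The assumption that the average degree is at least $d-\varepsilon$ translates immediately into the lower bound
\[
\sum_{v\in V(\mathcal{H})} d(v) \;\ge\; n(d-\varepsilon).
\]

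For the matching upper bound, I would split the vertex set into the $x$ vertices of degree at most $d-1$ and the remaining $n-x$ vertices, each of degree at most $\Delta$ by hypothesis. This yields
\[
\sum_{v\in V(\mathcal{H})} d(v) \;\le\; x(d-1) + (n-x)\Delta \;=\; n\Delta - x(\Delta-d+1).
\]
Combining the two inequalities and solving for $x$ gives exactly
\[
x \;\le\; \frac{\Delta-d+\varepsilon}{\Delta-d+1}\,n,
\]
which is the first claim. The only point requiring minor attention is to confirm that $\Delta-d+1 > 0$ so that the direction of the division is correct; this is forced by the fact that the maximum degree dominates the average degree, so $\Delta \ge d-\varepsilon > d-1$ and hence $\Delta \ge d$.

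For the ``in particular'' conclusion, I would note that the number of vertices of degree at least $d$ equals $n - x$, and the bound just derived gives
\[
n - x \;\ge\; n\left(1 - \frac{\Delta-d+\varepsilon}{\Delta-d+1}\right) \;=\; \frac{1-\varepsilon}{\Delta-d+1}\,n,
\]
which is $\Omega(n)$ because $\varepsilon < 1$ and $d, \Delta$ are treated as fixed constants independent of $n$. There is essentially no obstacle in this proof; it is a one-line double count dressed up with a small arithmetic rearrangement.
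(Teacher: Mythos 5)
Your proof is correct. Note that the paper itself offers no proof of this lemma at all---it is imported verbatim from Khormali and Palmer \cite{KP22}---so there is no in-paper argument to diverge from; your computation (bounding the degree sum below by $n(d-\varepsilon)$ and above by $x(d-1)+(n-x)\Delta$, then solving for $x$) is the standard averaging argument and is exactly what one finds in the cited source. The one point needing care, positivity of $\Delta-d+1$, you handled correctly via $\Delta\ge$ average degree $>d-1$ together with integrality of $\Delta$ and $d$.
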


Now we are ready to prove Theorem \ref{ch1:thm2}. Let $\mathcal{H}$ be an $r$-uniform $n$-vertex linear hypergraph that is $\bigcup\limits_{i=1}^k S^+_{d_i}$-free. Let $A$ be the vertex set 
each vertex of which has degree at least some fixed large enough constant $D=D(d_1,k,r)$. If $|A|\ge k$, then we can greedily embed a copy of $\bigcup\limits_{i=1}^k S^+_{d_i}$ into $\mathcal{H}$. Thus we have $|A|\le k-1$.

After removing the vertices of $A$ and the hyperedges incident to them from $\mathcal{H}$, we 
 obtain an $r$-uniform hypergraph $\mathcal{H}'$. The maximum degree in $\mathcal{H}'$ is less than $D$. Let $|A|=i-1$. Therefore $1\le i\le k$. If the average degree in $\mathcal{H'}$ is at least $d_i-\varepsilon$ for any $0<\varepsilon<1$, then by 
 Lemma~\ref{lem1} we have $\Omega(n)$ vertices of degree at least $d_i$ in $\mathcal{H'}$.
In this case we can greedily embed a copy of $\bigcup\limits_{t=i}^k S^+_{d_t}$ into $\mathcal{H'}$. 
Then, together with $\bigcup\limits_{t=1}^{i-1} S^+_{d_t}$ consisting of hyperedges incident to $A$, we 
obtain a copy of $\bigcup\limits_{t=1}^k S^+_{d_t}$ in $\mathcal{H}$, a contradiction.

Therefore, the average degree in $\mathcal{H'}$ is at most $d_i-1$. Thus we have
\begin{equation*}
|E(\mathcal{H'})|\le \frac{d_i-1}{r}(n-|A|).
\end{equation*}
Let $B$ be the vertex set of $\mathcal{H'}$, {\it i.e.}, $B=V(\mathcal{H})\setminus A$. Denote by $E(A,B)$ the hyperedge set of $\mathcal{H}$ that contain at least one vertex of $A$ and one vertex of $B$. To count the size of $E(A,B)$, we first count the number of pairs $(h,\{u,v\})$ where $h$ is a hyperedge of $\mathcal{H}$, $u\in A\cap h$ and $v\in B\cap h$.
Let $h\in E(A,B)$. Then we have $|A\cap h|$ choices for $u$ and $|B\cap h|$ choices for $v$. Thus the number of pairs $(h,\{u,v\})$ is
\begin{equation*}
\sum\limits_{h\in E(A,B)} |A\cap h||B\cap h|\ge \sum\limits_{h\in E(A,B)} (r-1)=|E(A,B)|(r-1).
\end{equation*}

On the other hand, for a fixed $u$ and $v$ there is at most one hyperedge containing them, as $\mathcal{H}$ is linear. Thus, the number of pairs $(h, \{u,v\})$ is at most $|A|(n-|A|)$.

Combining these two  
estimates 
for the number of pairs $(h,\{u,v\})$, and solving for $|E(A,B)|$ gives
\begin{equation*}
|E(A,B)|\le \frac{|A|}{r-1}(n-|A|).
\end{equation*}

Furthermore, the maximum number of hyperedges contained completely in $A$ is at most $\binom{|A|}{2}/\binom{r}{2}$, as each pair of vertices in $A$ is contained in at most one hyperedge.

Therefore, the hyperedge number of $\mathcal{H}$ is
\begin{equation*}
|E(\mathcal{H})|\le \frac{d_i-1}{r}(n-i+1)+\frac{i-1}{r-1}(n-i+1)+\frac{\binom{i-1}{2}}{\binom{r}{2}}.
\end{equation*}

As $1\le i\le k$, we have that for $n$ large enough,
\begin{equation*}
|E(\mathcal{H})|\le \mathop{max}\limits_{1\le i\le k} \bigg\{\frac{d_i-1}{r}(n-i+1)+\frac{i-1}{r-1}(n-i+1)+\frac{\binom{i-1}{2}}{\binom{r}{2}}\bigg\}.
\end{equation*}

Just like the construction of the 
extremal linear hypergraph that Khormali and Palmer presented 
for the sharpness statement 
in Theorem~\ref{ch1:KP2}, we can present a construction that satisfies the sharpness assertion.
Denote by $[r]^d$ the integer lattice formed by $d$-tuples from the set $[r]$. Indeed, we can view $[r]^d$ as a hypergraph $\mathcal{H}$, as follows. The vertex set of $\mathcal{H}$ 
consists of all $d$-tuples from the set $[r]$. And the collection of $d$-tuples that are fixed in all but one coordinate form a hyperedge. Thus, $[r]^d$ is an $r^d$-vertex $r$-uniform hypergraph with $d\cdot r^{d-1}$ hyperedges.
It is easy to check that $[r]^d$ is linear, as any two hyperedges of
it are either disjoint or intersect in exactly one vertex. Furthermore, $[r]^d$ is $d$-regular, as every vertex of it 
is included in exactly $d$ hyperedges. Note that the hyperedges of $[r]^d$ can be partitioned into $d$ classes, each of which forms a matching. This gives a natural proper hyperedge-coloring of $\mathcal{H}$. We call such a proper hyperedge-coloring a canonical coloring.

The Cartesian product of two sets $A$ and $B$, denoted $A\times B$, is the set of all ordered pairs $(a,b)$ where $a\in A$ and $b\in B$. Let $\mathcal{H}$ and $\mathcal{G}$ be two hypergraphs. Then the Cartesian product 
$\mathcal{H}\times \mathcal{G}$ 
of $\mathcal{H}$ and $\mathcal{G}$ 
 is defined as the hypergraph on vertex set $V(\mathcal{H})\times V(\mathcal{G})$ with hyperedge set
\begin{equation*}
E(\mathcal{H}\times \mathcal{G})=\{\{u\}\times e|u\in V(\mathcal{H}),e\in E(\mathcal{G})\}\cup \{f\times \{v\}|v\in V(\mathcal{G}),f\in E(\mathcal{H})\}.
\end{equation*}

Note that if $\mathcal{H}$ is $r$-uniform and $\mathcal{G}$ is $s$-uniform, then the hyperedges in $\{f\times \{v\} 
 |v\in V(\mathcal{G}),f\in E(\mathcal{H})\}$ are of size $r$. We claim that if two hypergraphs $\mathcal{H}$ and $\mathcal{G}$ are both linear, then $\mathcal{H}\times \mathcal{G}$ is linear.
Indeed, for any two vertices $u_1\times v_1, u_2\times v_2\in V(\mathcal{H})\times V(\mathcal{G})$, they can be contained in one hyperedge if either $u_1=u_2$ or $v_1=v_2$. Assume that $u_1=u_2=u$. Then we note that $u\times v_1$ and $u\times v_2$ are contained in a hyperedge if and only if $v_1$ and $v_2$ are contained in a hyperedge in $\mathcal{G}$.

Now suppose that $n-i+1$ is divisible by $(r-1)^{i-1}$ and $r^{d_i-1}$ for any $1\le i\le k$. We can construct a hypergraph $\mathcal{H}^*_i$ as follows. The vertex set of $\mathcal{H}^*_i$ embodies two disjoint vertex subsets $A^*$ and $B^*$, where $A^*=\{a_1,a_2,\ldots,a_{i-1}\}$ and $B^*$ has $n-i+1$ vertices whose vertices are partitioned into distinct copies of $[r-1]^{i-1}\times [r]^{d_i-1}$.
Assume that all hyperedges of size $r-1$ in the copies of $[r-1]^{i-1}\times [r]^{d_i-1}$ inherit their canonical hyperedge-coloring from the original hypergraph $[r-1]^{i-1}$. The hyperedge set of $\mathcal{H}^*_i$ is partitioned into two hyperedge subsets $C^*$ and $D^*$, where $C^*$ consists of all hyperedges of size $r$ in the copies of $[r]^{d_i-1}$ in $B^*$, and $D^*$ consists of every $h\cup \{a_j\}$ where $a_j\in A^*$ and $h$ is a hyperedge of color $j$ in a copy of $[r-1]^{i-1}$ with a canonical hyperedge-coloring. Except for the above two hyperedge sets $C^*$ and $D^*$, we can embed $\binom{i-1}{2}/\binom{r}{2}$ hyperedges into $A^*$ without violating the linear property of $\mathcal{H}$.

We claim that $\mathcal{H}^*_i$ is linear. For any two hyperedges of $C^*$, they are hyperedges from copies of the linear hypergraph $[r]^{d_i-1}$, and therefore intersect in at most one vertex. As for any two hyperedges of $D^*$, they 
 either have the same color and therefore intersect in a vertex in $A^*$ but not in $B^*$, or have 
 different colors and may intersect in $B^*$ in one vertex, but do not intersect in $A^*$. And a hyperedge of $C^*$ and a hyperedge of $D^*$ intersect in at most one vertex by the construction of $[r-1]^{i-1}\times [r]^{d_i-1}$.

Next step is to show that $\mathcal{H}^*_i$ is $\bigcup\limits_{i=1}^k S^+_{d_i}$-free.
Every vertex in $B^*$ is incident to exactly $d_i-1$ hyperedges that are contained completely in $B^*$. Therefore, if there exists a copy of $\bigcup\limits_{j=1}^k S^+_{d_j}$ in $\mathcal{H}^*_i$, then it must use $i$ distinct vertices in $A^*$. Since $A^*$ has only $i-1$ vertices, we have that $\mathcal{H}^*_i$ is $\bigcup\limits_{i=1}^k S^+_{d_i}$-free.

Fix $1\le i\le k$. The number of hyperedge in $\mathcal{H}^*_i$ is
\begin{equation*}
\begin{aligned}
|E(\mathcal{H}^*_i)|&=|E(A^*,B^*)|+|E(A^*)|+|E(B^*)|\\
&\ge (i-1)(r-1)^{i-2}\cdot \frac{n-i+1}{(r-1)^{i-1}}+\frac{\binom{i-1}{2}}{\binom{r}{2}}+(d_i-1)r^{d_i-2}\cdot \frac{n-i+1}{r^{d_i-1}}\\
&=\bigg(\frac{d_i-1}{r}+\frac{i-1}{r-1}\bigg)(n-i+1)+\frac{\binom{i-1}{2}}{\binom{r}{2}}.
\end{aligned}
\end{equation*}

Thus among all $1\le i\le k$, we can get an extremal hypergraph $\mathcal{H}^*$ such that
\begin{equation*}
\begin{aligned}
|E(\mathcal{H}^*)|&=\mathop{max}\limits_{1\le i\le k} \{|E(\mathcal{H}^*_i)|\}\\
&\ge \mathop{max}\limits_{1\le i\le k} \bigg\{\bigg(\frac{d_i-1}{r}+\frac{i-1}{r-1}\bigg)(n-i+1)+\frac{\binom{i-1}{2}}{\binom{r}{2}}\bigg\}.
\end{aligned}
\end{equation*} \qed

\section{Proof of Theorem \ref{ch1:thm3}}\label{sec-thm3} 
Let $G$ be a graph, $\mathcal{G}$ a Berge-$G$ and $f$ an arbitrary bijection from $\mathcal{G}$ to $G$ such that $f(h)\subset h$ for any $h\in E(\mathcal{G})$, {\it i.e.}, $f$ is a bijection that establishes that $\mathcal{G}$ is a Berge-$G$. We call the graph formed by the image of $f$ a skeleton of $\mathcal{G}$. Thus, a skeleton of $\mathcal{G}$ is a copy of $G$ embedded into the vertex set of $\mathcal{G}$.

Before we prove Theorem \ref{ch1:thm3}, we first introduce
the following two useful lemmas.

\begin{lemma}[\cite{KP22}]\label{lem2}
Fix integers $r\ge 2, \ell\le r$ and let $\mathcal{G}$ be an $r$-uniform hypergraph. If $v$ is a vertex of degree $d(v)\ge \ell$ in $\mathcal{G}$, then $\mathcal{G}$ contains a Berge-$S_\ell$ with center $v$.
\end{lemma}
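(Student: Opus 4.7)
The plan is to reduce the statement to a single application of Hall's marriage theorem. Since $d(v)\ge \ell$, I would first pick any $\ell$ distinct hyperedges $h_1, \ldots, h_\ell$ containing $v$. Observe that exhibiting a Berge-$S_\ell$ with center $v$ using these hyperedges is equivalent to finding a system of distinct representatives for the sets $h_1\setminus\{v\}, \ldots, h_\ell\setminus\{v\}$: if $x_i \in h_i\setminus\{v\}$ are pairwise distinct, then the map sending the center of $S_\ell$ to $v$, the $i$-th leaf to $x_i$, and the $i$-th star edge to $h_i$ realises the required Berge copy (injectivity on vertices is automatic since $v\notin\{x_1,\ldots,x_\ell\}$, and bijectivity on edges follows from the distinctness of the $h_i$).

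To produce the SDR, I would form the bipartite incidence graph $B$ with parts $[\ell]$ and $V(\mathcal{G})\setminus\{v\}$, joining $i$ to $u$ iff $u\in h_i$. Each vertex $i\in[\ell]$ has exactly $|h_i\setminus\{v\}|=r-1$ neighbours in $B$, so it suffices to verify Hall's condition $|N_B(S)|\ge |S|$ for every $S\subseteq[\ell]$.

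If $|S|\le r-1$, the uniform degree lower bound $r-1$ already gives Hall's condition from any single $i\in S$. The only remaining case is $|S|=\ell=r$, where I need $|N_B(S)|\ge r$. Suppose instead that $|N_B(S)|\le r-1$. Then each $h_i\setminus\{v\}$ with $i\in S$ is an $(r-1)$-subset of $N_B(S)$ and hence equal to $N_B(S)$, which forces $h_1=\cdots=h_\ell$, contradicting the distinctness of the chosen hyperedges. Thus Hall's condition holds, the SDR exists, and the Berge-$S_\ell$ centered at $v$ is built.

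The main (and essentially only) obstacle is the boundary case $\ell=r$: a naive greedy selection of representatives can fail precisely here, because when one reaches the last hyperedge the $r-1$ already-used vertices might exactly exhaust its non-$v$ part. Replacing greedy by Hall sidesteps this, and the contradiction is extracted from the distinctness of the hyperedges through $v$, which is exactly the available leverage.
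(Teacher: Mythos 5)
Your argument is correct. One small remark first: the paper itself gives no proof of this lemma --- it is quoted verbatim from Khormali and Palmer \cite{KP22} --- so there is no in-paper proof to compare against; I can only assess your proof on its own terms, and it stands. The reduction to a system of distinct representatives for $h_1\setminus\{v\},\dots,h_\ell\setminus\{v\}$ is exactly the right formulation of ``Berge-$S_\ell$ with center $v$'', and your case split in Hall's condition is complete: for $|S|\le r-1$ a single set of size $r-1$ suffices, and the only other possibility under $\ell\le r$ is $|S|=\ell=r$, where your observation that $|N_B(S)|\le r-1$ would force every $h_i\setminus\{v\}$ to coincide with $N_B(S)$ (hence all $h_i$ equal, contradicting distinctness) closes the gap. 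You also correctly diagnose why a naive greedy choice can fail only at the boundary $\ell=r$. The one hypothesis you use implicitly is that the $\ell$ hyperedges through $v$ are distinct as vertex sets, i.e.\ that the hypergraph is simple; that is the standing convention in this paper and in \cite{KP22} (and the lemma would in fact be false for multihypergraphs), so no change is needed, but it is worth being aware that your contradiction genuinely consumes this assumption.
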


\begin{lemma}[\cite{KP22}]\label{lem3}
Fix integers $\ell>r\ge 2$ and let $\mathcal{G}$ be an $r$-uniform hypergraph. If $v$ is a vertex of degree $d(v)> \binom{\ell-1}{r-1}$ in $\mathcal{G}$, then $\mathcal{G}$ contains a Berge-$S_\ell$ with center $v$.
\end{lemma}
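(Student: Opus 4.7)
I plan to encode the existence of a Berge-$S_\ell$ centered at $v$ as a bipartite matching problem and then apply K\"{o}nig's theorem. Specifically, let $B$ be the bipartite graph with left part $L$ equal to the set of hyperedges of $\mathcal{G}$ containing $v$ and right part $R=V(\mathcal{G})\setminus\{v\}$, where $h\in L$ is adjacent to $u\in R$ iff $u\in h$. A matching of size $\ell$ in $B$, pairing distinct $h_i\in L$ with distinct $u_i\in R$ so that $u_i\in h_i$, delivers exactly a Berge-$S_\ell$ with center $v$: the $\ell$ distinct hyperedges $h_i$ all contain $v$, and each additionally contains its assigned distinct leaf $u_i$, so the injection $f$ and bijection $f'$ required in the definition of a Berge hypergraph are immediate.

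Suppose for contradiction that $B$ has no matching of size $\ell$. By K\"{o}nig's theorem, $B$ admits a vertex cover $C$ with $|C|\le \ell-1$; put $a=|C\cap L|$ and $b=|C\cap R|$, so that $a+b\le \ell-1$. Every hyperedge $h\in L\setminus C$ must satisfy $h\setminus\{v\}\subseteq C\cap R$, so such an $h$ is determined by choosing its $(r-1)$-subset from the fixed $b$-set $C\cap R$. Hence
\begin{equation*}
d(v)=|L|\le a+\binom{b}{r-1}\le (\ell-1-b)+\binom{b}{r-1}=:g(b).
\end{equation*}

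The remaining step is to verify $g(b)\le \binom{\ell-1}{r-1}$ for every $0\le b\le \ell-1$. Using Pascal's identity, the forward difference $g(b+1)-g(b)=\binom{b}{r-2}-1$ is negative for $b<r-2$, zero at $b=r-2$, and strictly positive for $b\ge r-1$, so $g$ is first non-increasing and then non-decreasing on $\{0,1,\dots,\ell-1\}$ and attains its maximum at one of the endpoints. The endpoint values are $g(0)=\ell-1$ and $g(\ell-1)=\binom{\ell-1}{r-1}$, and the hypothesis $\ell>r\ge 2$ yields $1\le r-1\le \ell-2$; by unimodality of binomial coefficients this gives $\binom{\ell-1}{r-1}\ge \binom{\ell-1}{1}=\ell-1$. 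Therefore $d(v)\le \binom{\ell-1}{r-1}$, contradicting the hypothesis, and the required matching of size $\ell$ exists.

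The main obstacle is the final endpoint comparison, which is exactly where the strict inequality $\ell>r$ is essential: without $r-1\le \ell-2$, unimodality fails to place $\binom{\ell-1}{r-1}$ above the linear value $\ell-1$, and the bound collapses. Notice that the complementary regime $\ell\le r$ is handled separately by Lemma~\ref{lem2}, where a purely greedy argument already suffices because each hyperedge through $v$ contains $r-1\ge \ell-1$ non-center vertices and so can supply an unused leaf on demand.
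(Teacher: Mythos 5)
Your proof is correct, but note that there is no proof in this paper to compare it against: the lemma is imported verbatim from Khormali and Palmer \cite{KP22} and used as a black box, so your argument stands as a self-contained substitute. The reduction is exactly right: a Berge-$S_\ell$ with center $v$ is the same thing as a matching of size $\ell$ in the bipartite incidence graph between $L$ (hyperedges through $v$) and $R=V(\mathcal{G})\setminus\{v\}$, and K\"{o}nig's theorem then does the real work. The counting step is sound because distinct hyperedges through $v$ yield distinct $(r-1)$-sets $h\setminus\{v\}$, so every hyperedge uncovered by $C\cap L$ is an honest injection into the $(r-1)$-subsets of $C\cap R$, giving $d(v)\le (\ell-1-b)+\binom{b}{r-1}=g(b)$; the endpoint analysis of $g$ together with $\binom{\ell-1}{r-1}\ge \ell-1$ (which holds precisely because $\ell>r\ge 2$ puts $1\le r-1\le \ell-2$) yields the contradiction. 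This K\"{o}nig/deficiency-Hall route is also what makes the sharp constant $\binom{\ell-1}{r-1}$ attainable; a naive greedy maximal-star argument only gives the weaker bound $d(v)\le (\ell-1)+\binom{\ell-1}{r-1}$, so your choice of tool is not just valid but necessary. One cosmetic slip: for $r=2$ the forward difference $g(b+1)-g(b)=\binom{b}{0}-1$ is identically $0$, so it is \emph{not} ``strictly positive for $b\ge r-1$'' in that case; the function $g$ is then constant equal to $\ell-1=\binom{\ell-1}{1}$, so your endpoint conclusion is unaffected, but the sign claim should be weakened to ``non-negative for $b\ge r-2$,'' which is all the unimodality argument requires.
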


Now we are ready to prove Theorem \ref{ch1:thm3}.
For the lower bound, we need to construct an extremal hypergraph. Fix $1\le s\le k-1$. We construct a hypergraph $\mathcal{H}^*_s$ as follows. We consider a $(d_{s+1}-1)$-regular $(r-s)$-uniform hypergraph on a vertex set $B^*$ of size $n-s$. Such a hypergraph exists as $r-s$ divides $n-s$ and $r-s\ge d_{s+1}$. Then we add a fixed vertex set $A^*$ of size $s$ to each hyperedge to form an $r$-uniform hypergraph on $n$ vertices. Each vertex in $B^*$ has degree $d_{s+1}-1$, so the skeleton of any
Berge-$S_{d_j}$ for $1\le j\le s+1$ must use at least one vertex from $A^*$. Therefore, $\mathcal{H}^*_s$ is Berge-$\bigcup\limits_{j=1}^k S_{d_j}$-free and has
\begin{equation*}
|E(\mathcal{H}^*_s)|=\frac{d_{s+1}-1}{r-s}(n-s).
\end{equation*}
Thus, among all $\{\mathcal{H}^*_s\}$ for $1\le s\le k-1$, we can get an extremal hypergraph $\mathcal{H}^*$ such that
\begin{equation*}
|E(\mathcal{H}^*)|=\mathop{max}\limits_{1\le s\le k-1}\bigg\{\frac{d_{s+1}-1}{r-s}(n-s)\bigg\}.
\end{equation*}

Next we prove the upper bound. Let $\mathcal{H}$ be an $n$-vertex Berge-$\bigcup\limits_{i=1}^k S_{d_i}$-free $r$-uniform hypergraph. Let $A$ be the vertex set of $\mathcal{H}$  
each vertex of which has degree greater than some fixed large enough constant $D=D(d_1,k,r)$, and let $B=V(\mathcal{H})\setminus A$. Assume that $|A|=s$. Then we have the following claim.
\begin{claim}
$s\le k-1$.
\end{claim}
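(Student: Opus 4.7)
The plan is to argue by contradiction: suppose $s \ge k$ and pick distinct vertices $v_1, v_2, \ldots, v_k \in A$ (each of degree greater than $D$). We then construct a Berge-$\bigcup_{i=1}^k S_{d_i}$ in $\mathcal{H}$ with $v_i$ serving as the center of the Berge-$S_{d_i}$ component, which contradicts the hypothesis that $\mathcal{H}$ is Berge-$\bigcup_{i=1}^k S_{d_i}$-free. The construction proceeds greedily, processing the components in order $i=1,2,\ldots,k$ and, within each component, adding one hyperedge together with a fresh leaf vertex at a time.

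Consider a typical extension step. Suppose we are partway through constructing Berge-$S_{d_i}$ and have so far committed to a set $\mathcal{E}'$ of previously used hyperedges and a partial skeleton $U$ consisting of all $k$ centers together with the leaves already chosen. Crude bounds give $|\mathcal{E}'| \le \sum_{j=1}^{k} d_j \le k d_1$ and $|U| \le k + \sum_{j=1}^{k} d_j \le k(d_1+1)$. To extend the current Berge-$S_{d_i}$ by one edge, it suffices to find a hyperedge $h$ with $v_i \in h$, $h \notin \mathcal{E}'$, and $h \setminus U \ne \emptyset$; any vertex of $h \setminus U$ then serves as a valid new leaf, keeping the full skeleton vertex-disjoint, and the hyperedge is fresh so the Berge bijection remains well-defined.

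The counting is immediate. The vertex $v_i$ is contained in $d(v_i) > D$ hyperedges; of these, at most $kd_1$ lie in $\mathcal{E}'$, and at most $\binom{|U|-1}{r-1} \le \binom{k(d_1+1)-1}{r-1}$ have all of their $r-1$ non-$v_i$ vertices inside $U\setminus\{v_i\}$. Choosing $D = D(d_1,k,r)$ strictly larger than $kd_1 + \binom{k(d_1+1)-1}{r-1}$ therefore guarantees that a suitable hyperedge exists at every step, so the greedy construction succeeds. The main conceptual hurdle is simply to recognise that in a Berge copy only the \emph{skeleton} vertices must be pairwise distinct, so the relevant ``used vertex'' set $U$ has size $O(1)$ rather than being the much larger (and $n$-dependent) set of all vertices incident to previously selected hyperedges; this is precisely what keeps the bad-hyperedge count bounded by a constant depending only on $d_1,k,r$. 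The hypothesis $r\ge d_1+k-1$ ensures $d_i\le r$ throughout, keeping us in the regime of Lemma \ref{lem2}, whose underlying idea (a high-degree vertex forces a Berge-star centred there) is exactly what the greedy counting implements in the presence of the forbidden set $\mathcal{E}'\cup U$.
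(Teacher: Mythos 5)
Your proof is correct and follows essentially the same strategy as the paper: assume $|A|\ge k$, fix $k$ high-degree centers, and greedily embed the $k$ Berge-stars, using the fact that only a constant (depending on $d_1,k,r$) number of hyperedges at each center are ruled out by the previously committed hyperedges and skeleton vertices. The only difference is bookkeeping: the paper builds the first $k-1$ stars, then invokes Lemma~\ref{lem2} or Lemma~\ref{lem3} to find an oversized Berge-star at the remaining center and prunes skeleton collisions, whereas you run a single uniform one-hyperedge-at-a-time count with the explicit choice $D>kd_1+\binom{k(d_1+1)-1}{r-1}$, which is arguably cleaner and avoids those lemmas altogether.
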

\begin{proof}
Suppose to the contrary that $s\ge k$. Let $A'\subseteq A$ be a set of $k$ vertices of degree at least $D$. As $D$ is large enough, it is easy to see that there is a Berge-$S_{d_{k-1}}$ with center in $A'$ but whose skeleton is otherwise disjoint from $A'$.

Now we suppose that there exists a Berge-$\bigcup\limits_{j=1}^{k-1} S_{d_j}$ whose skeleton intersects 
$A'$ in $k-1$ vertices. This Berge-$\bigcup\limits_{j=1}^{k-1} S_{d_j}$ has $\bigcup\limits_{j=1}^{k-1} d_j$ hyperedges and its skeleton spans $\bigcup\limits_{j=1}^{k-1} (d_j+1)$ vertices. Let $u$ be the vertex in $A'$ not in the skeleton of this Berge-$\bigcup\limits_{j=1}^{k-1} d_j$.
Then we remove the hyperedges of the Berge-$\bigcup\limits_{j=1}^{k-1} d_j$ from $\mathcal{H}$.
As $D$ is large enough, by Lemma \ref{lem2} or Lemma \ref{lem3} we have that among the remaining hyperedges there is a Berge-$S_{\cup_{1\le j\le k-1} (d_j+1)+d_{k}}$, denoted $\mathcal{S}$, with center $u$. Thus $\mathcal{S}$ is hyperedge-disjoint from the Berge-$\bigcup\limits_{j=1}^{k-1} S_{d_j}$. However, the skeleton of both of these subhypergraphs may intersect. At most $\bigcup\limits_{j=1}^{k-1} (d_j+1)$ vertices of the skeleton of $\mathcal{S}$
are shared with the skeleton of the Berge-$\bigcup\limits_{j=1}^{k-1} S_{d_j}$. Therefore, there is a Berge-$S_{d_k}$ whose skeleton is disjoint from the skeleton of the Berge-$\bigcup\limits_{j=1}^{k-1} S_{d_j}$. In particular, we have a Berge-$\bigcup\limits_{j=1}^{k} S_{d_j}$, a contradiction. 
This confirms the claim. 
\end{proof}

We also need the following claim before we can complete our proof. 
\begin{claim}\label{claim1}
If $s=k-1$, then every vertex in $B$ has degree at most $d_k-1$. If $s<k-1$, then the average degree of the vertices in $B$ is at most $d_{s+1}-1$.
\end{claim}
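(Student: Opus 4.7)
The plan is to prove both assertions by contradiction. In each case I shall suppose the stated degree bound on $B$ fails, and then construct a Berge-$\bigcup_{i=1}^{k} S_{d_i}$ inside $\mathcal{H}$ by using the $s$ high-degree vertices of $A$, supplemented by appropriate vertices from $B$, as the $k$ centers of the $k$ stars in the forest; this contradicts the hypothesis that $\mathcal{H}$ is Berge-$\bigcup_{i=1}^{k} S_{d_i}$-free.

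For the first assertion ($s=k-1$), I would suppose that some vertex $v\in B$ has $d(v)\ge d_k$, and take $a_1,\dots,a_{k-1},v$ as the $k$ centers. Since $d_k\le d_1\le r-k+1\le r$, Lemma \ref{lem2} applied to $v$ produces a Berge-$S_{d_k}$ centered at $v$. The leaves must avoid $A$, and a system-of-distinct-representatives argument, powered by the hypothesis $r\ge d_1+k-1$ (so that each hyperedge through $v$ has enough vertices outside $A\cup\{v\}$), lets us select them in $V\setminus(A\cup\{v\})$. Next I would construct stars at $a_1,\dots,a_{k-1}$ in turn: since $d(a_i)>D$ for a sufficiently large constant $D=D(d_1,k,r)$, Lemma \ref{lem3} furnishes a Berge-$S_\ell$ at $a_i$ for any prescribed constant $\ell$; choosing $\ell$ large enough to absorb the constantly many conflicts with previously built skeletons and previously used hyperedges, and then discarding the conflicting hyperedges and leaves, yields a Berge-$S_{d_i}$ disjoint from everything built so far. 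The union of these $k$ stars is the forbidden Berge-$\bigcup_{i=1}^{k} S_{d_i}$.

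For the second assertion ($s<k-1$), I would suppose the average degree of the subhypergraph of $\mathcal{H}$ induced on $B$ is strictly greater than $d_{s+1}-1$, hence at least $d_{s+1}-\varepsilon$ for some $\varepsilon\in(0,1)$. Because the maximum degree on $B$ is at most $D$, the Average Degree Lemma (Lemma \ref{lem1}) yields $\Omega(n)$ vertices in $B$ of degree at least $d_{s+1}$. Selecting $k-s$ of these as $b_{s+1},\dots,b_k$ and joining them with $a_1,\dots,a_s$ to form the $k$ centers, and building their stars in the order $b_{s+1},\dots,b_k,a_1,\dots,a_s$ by the same SDR-plus-discard technique as in Part 1 (invoking Lemma \ref{lem2} at the $b_t$ and Lemma \ref{lem3} at the $a_t$, with $n$ large enough that the $\Omega(n)$ supply of potential $b_t$'s is not exhausted by constantly many forbidden vertices), produces the forbidden Berge-$\bigcup_{i=1}^{k} S_{d_i}$, the desired contradiction.

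The main obstacle is the disjointness bookkeeping at the "tight" centers $v$ (in Part 1) and $b_t$ (in Part 2), whose degree only barely meets $d_k$ or $d_{s+1}$; here we cannot afford to throw away hyperedges, so we must rely on the structural room provided by $r\ge d_1+k-1$ to extract the required SDR of leaves avoiding the other designated centers. The high-degree centers in $A$, whose degree exceeds the arbitrarily large constant $D$, supply ample slack for all remaining bookkeeping.
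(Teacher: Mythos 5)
Your proposal is correct and follows essentially the same route as the paper's proof: for $s=k-1$ you shrink the hyperedges at an offending vertex $v\in B$ so their chosen leaves avoid $A$ (possible since $r\ge d_1+k-1$), apply Lemma \ref{lem2} to get a Berge-$S_{d_k}$ with skeleton in $B$, and then attach $k-1$ further stars centred in $A$ using the large degree slack there; for $s<k-1$ you extract $\Omega(n)$ vertices of degree at least $d_{s+1}$ and choose centres avoiding a constant-size forbidden set (the paper formalises this as selecting pairwise ``far'' vertices, i.e.\ with no common neighbour in $B$) to obtain pairwise hyperedge- and skeleton-disjoint Berge-stars. The one detail to adjust is that the claim, and its later use in bounding $\sum_{v\in B}d(v)$, refers to degrees taken in $\mathcal{H}$ itself rather than in the subhypergraph induced on $B$; your argument goes through verbatim under that convention, since $r\ge d_1+k-1>d_1+s$ still allows every star's skeleton to avoid $A$.
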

\begin{proof}
We first consider the case that $s=k-1$, and suppose that there is a vertex $v$ in $B$ of degree at least $d_k$.
Let us remove vertices from the hyperedges incident to $v$ so that the resulting hyperedges are disjoint from $A$ and of size exactly $d_k$. This is possible as $r\ge d_1+k-1$. By Lemma \ref{lem2}, there is a Berge-$S_{d_k}$ with center $v$ among these $d_k$-uniform hyperedges. The skeleton of this Berge-$S_{d_k}$ is necessarily contained in $B$. It is easy to see that this Berge-$S_{d_k}$ corresponds to a Berge-$S_{d_k}$, denoted by $\mathcal{S}$, in $\mathcal{H}$ whose skeleton is contained in $B$. Since the degrees of the vertices in $A$ are large enough, we can construct a Berge-$\bigcup\limits_{j=1}^{k-1} S_{d_j}$
whose hyperedges and skeleton are disjoint from those of $\mathcal{S}$. Therefore $\mathcal{H}$ contains a Berge-$\bigcup\limits_{j=1}^{k} S_{d_j}$, a contradiction.

Next we consider the case that 
$s<k-1$. Suppose that the average degree of the vertices in $B$ is at least $d_{s+1}-1+\varepsilon$ for some constant $0<\varepsilon\le 1$. Thus the sum of degrees of the vertices in $B$ is at least $(d_{s+1}-1+\varepsilon)(n-s)$.
Denote by $t$ the number of vertices of degree at most $d_{s+1}-1$ in $B$. Recall that the vertices of $B$ have degree at most $D$. Thus the sum of degrees of the vertices in $B$ is at most $(d_{s+1}-1)t+D(n-s-t)$.

Combining these estimates and solving for $t$ gives
\begin{equation*}
t\le \frac{D-d_{s+1}+1-\varepsilon}{D-d_{s+1}+1}(n-s)=(1-\varepsilon')(n-s)
\end{equation*}
for some $\varepsilon'>0$ depending only on $\varepsilon, d_{s+1}, k$ and $r$. Therefore, the number of vertices of degree at least $d_{s+1}$ is $\varepsilon'n=\Omega(n)$.

Let us call a pair of vertices $u,v\in B$ \emph{far} if they do not share a common neighbor in $B$ (they may still have a common neighbor in $A$). Since the vertices of $B$ have constant maximum degree, we can find a subset $B'\subset B$ of size $\Omega(n)$ such that all vertices have degree at least $d_{s+1}$ and all pairs of vertices are far.

For each vertex $u\in B'$, there is a Berge-$S_{d_{s+1}}$ with center $u$. The hyperedges of this Berge-$S_{d_{s+1}}$ may intersect $A$. However, as $r\ge d_1+k-1>d_1+s$ we can find a skeleton of this Berge-$S_{d_{s+1}}$ that does not include a vertex of $A$. Since any two vertices in $B'$ do not share a common neighbor in $B$, we have a collection of pairwise hyperedge-disjoint copies of a Berge-$S_{d_{s+1}}$ whose skeletons are also pairwise vertex-disjoint. Since $n$ is large enough, we can find a copy of a Berge-$\bigcup\limits_{j=s+1}^k S_{d_j}$,
denoted by $\mathcal{S}$. Since the degrees of the vertices in $A$ are large enough, we can construct a Berge-$\bigcup\limits_{j=1}^{s} S_{d_j}$ whose hyperedges and skeleton are disjoint from those of $\mathcal{S}$. Therefore we have a Berge-$\bigcup\limits_{j=1}^{k} S_{d_j}$, a contradiction. 
This confirms our second claim. 
\end{proof}

Now, let us estimate the number of hyperedges in $\mathcal{H}$. As each hyperedge incident to $A$ is counted at most $s=|A|$ times,
\begin{equation*}
\sum\limits_{u\in A} d(u)\le s|E(\mathcal{H})|.
\end{equation*}
Let $d(B)$ be the average degree of the vertices in $B$ and observe that
\begin{equation*}
\begin{aligned}
r|E(\mathcal{H})|&=\sum\limits_{u\in A} d(u)+\sum\limits_{v\in B} d(v)\\
&\le s|E(\mathcal{H})|+d(B)(n-s).
\end{aligned}
\end{equation*}
Solving for $|E(\mathcal{H})|$ gives
\begin{equation*}
|E(\mathcal{H})|\le \frac{d(B)}{r-s}(n-s).
\end{equation*}
By Claim \ref{claim1} we have
\begin{equation*}
|E(\mathcal{H})|\le \mathop{max}\limits_{1\le s\le k-1} \bigg\{\frac{d_{s+1}-1}{r-s}(n-s)\bigg\}.
\end{equation*} \qed

\section{Proof of Theorem \ref{ch1:thm4}}\label{sec-thm4} 
In our proof, we make use of the following known result. 
\begin{lemma}[\cite{GMP20}]
Fix integers $\ell\ge 1$ and $r\ge 3$.
\begin{itemize}
\item [(1)] If $\ell >r$, then
\begin{equation*}
ex_r(n,\mbox{Berge-}S_\ell)\le \binom{\ell}{r}\frac{n}{\ell}.
\end{equation*}
Furthermore, this bound is sharp whenever $\ell$ divides $n$.

\item [(2)] If $\ell \le r$, then
\begin{equation*}
ex_r(n,\mbox{Berge-}S_\ell)\le \frac{\ell-1}{r}n.
\end{equation*}
Furthermore, this bound is sharp  
whenever 
$r$ divides $n$.
\end{itemize}
\end{lemma}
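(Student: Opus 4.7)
My plan is to reduce both parts of the lemma to a single maximum-degree argument, using the two degree lemmas already recorded in this section. Let $\mathcal{H}$ be any $n$-vertex Berge-$S_\ell$-free $r$-uniform hypergraph, and write $\Delta=\Delta(\mathcal{H})$. The contrapositives of Lemma \ref{lem2} (when $\ell\le r$) and Lemma \ref{lem3} (when $\ell>r$) force $\Delta\le \ell-1$ in the first case and $\Delta\le \binom{\ell-1}{r-1}$ in the second, since any vertex of higher degree would serve as the center of a Berge-$S_\ell$. A single degree-sum computation,
\begin{equation*}
r|E(\mathcal{H})|=\sum_{v\in V(\mathcal{H})} d(v)\le n\Delta,
\end{equation*}
then yields $|E(\mathcal{H})|\le \frac{\ell-1}{r}n$ in case (2) and $|E(\mathcal{H})|\le \frac{n}{r}\binom{\ell-1}{r-1}$ in case (1). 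Using the elementary identity $\frac{1}{r}\binom{\ell-1}{r-1}=\frac{1}{\ell}\binom{\ell}{r}$ rewrites the second bound in the form stated in Part (1).

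For sharpness I would exhibit explicit extremal hypergraphs in each regime. For Part (1), assuming $\ell\mid n$, take $\mathcal{H}$ to be a vertex-disjoint union of $n/\ell$ copies of the complete $r$-uniform hypergraph $K_\ell^{(r)}$; this has exactly $\binom{\ell}{r}\cdot\frac{n}{\ell}$ hyperedges, and each component contains only $\ell$ vertices while any Berge-$S_\ell$ needs $\ell+1$ distinct vertices lying in a single connected piece, so $\mathcal{H}$ is Berge-$S_\ell$-free. For Part (2), assuming $r\mid n$, the task is to build a simple $(\ell-1)$-regular $r$-uniform hypergraph on $n$ vertices; since $\ell-1<r$, I would take $\ell-1$ pairwise edge-disjoint perfect $r$-matchings of $[n]$ and let $\mathcal{H}$ be their union. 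Every vertex then has degree exactly $\ell-1$, the hyperedge count is $\frac{\ell-1}{r}n$, and by Lemma \ref{lem2} no Berge-$S_\ell$ arises.

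The main obstacle is not in the upper bounds, which are essentially immediate from the two degree lemmas combined with double counting; rather, it lies in justifying the sharpness construction for Part (2), namely confirming that $\ell-1$ pairwise edge-disjoint $r$-uniform perfect matchings of $[n]$ exist whenever $r\mid n$. This can be handled by a short greedy or probabilistic choice of $\ell-1$ partitions of $[n]$ into $r$-blocks (since $\ell-1$ is a fixed constant while $n$ is large, distinct hyperedges across the partitions are easy to guarantee), or by invoking standard resolvable-design constructions for structured $n$. With that in place, both halves of the lemma follow.
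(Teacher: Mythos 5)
The paper never proves this lemma: it is quoted verbatim from \cite{GMP20} and used as a black box in Section~\ref{sec-thm4}, so there is no in-paper argument to compare against. Judged on its own merits, your proof is correct and is the natural one. The upper bounds are exactly right: Berge-$S_\ell$-freeness forces $\Delta(\mathcal{H})\le \ell-1$ when $\ell\le r$ (contrapositive of Lemma~\ref{lem2}) and $\Delta(\mathcal{H})\le\binom{\ell-1}{r-1}$ when $\ell>r$ (contrapositive of Lemma~\ref{lem3}), and the handshake inequality $r|E(\mathcal{H})|\le n\Delta(\mathcal{H})$ together with the identity $\frac{1}{r}\binom{\ell-1}{r-1}=\frac{1}{\ell}\binom{\ell}{r}$ yields both stated bounds. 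The disjoint-$K_\ell^{(r)}$ construction for part (1) is also correct for the right reason: every hyperedge of a Berge-$S_\ell$ contains the image of the center, so all $\ell+1$ skeleton vertices would have to lie in a single connected component, which has only $\ell$ vertices.

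Two small repairs are needed in part (2). First, ``by Lemma~\ref{lem2} no Berge-$S_\ell$ arises'' points the implication the wrong way: Lemma~\ref{lem2} says that high degree \emph{produces} a Berge star, not that low degree precludes one. What you actually need is the trivial observation that the center of any Berge-$S_\ell$ lies in $\ell$ distinct hyperedges, so any hypergraph of maximum degree $\ell-1$ is Berge-$S_\ell$-free; state this directly instead of citing the lemma. Second, the existence of $\ell-1$ pairwise edge-disjoint perfect matchings needs no probabilistic or design-theoretic input. Identify the vertex set with $\mathbb{Z}_n$ and, for each shift $s\in\{0,1,\dots,\ell-2\}$, take the partition into the cyclic intervals
\begin{equation*}
\{s+jr,\,s+jr+1,\,\dots,\,s+jr+r-1\}, \qquad 0\le j<n/r,
\end{equation*}
with entries read modulo $n$. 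Since $r\mid n$, the starting point of every block in the $s$-th matching is congruent to $s$ modulo $r$, and since $\ell-1\le r-1$, distinct shifts give blocks with distinct starting residues; as an interval of length $r<n$ in $\mathbb{Z}_n$ is determined by its starting point, no hyperedge is repeated. With these two adjustments the argument is complete.
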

Now we are ready to prove Theorem \ref{ch1:thm4}.
We start again with the lower bounds. 
If $r\le d_k+k-2$, then we can present an extremal construction for 
an $r$-uniform $n$-vertex Berge-$\bigcup\limits_{1\le i\le k} S_{d_i}$-free hypergraph $\mathcal{H}_i$, as follows. Fix $1\le i\le k$. We put $n-i+1=q_i\cdot d_i+t_i$ with $0\le t_i<d_i$. Let $V(\mathcal{H}_i)=A^*_i\cup B^*_i$ with $|A^*_i|=i-1$ and $|B^*_i|=n-i+1$. For the case that $i=k$, we partition the vertices of $B^*_k$ into $q_k$ classes of size $d_k$ and
a single class of size $t_k$ if $t_k>0$. For each partition class $S^*_k$ of $B^*_k$ we form a complete $r$-uniform hypergraph $K^{r}_{d_k+k-1}$ ($K^{r}_{t_k+k-1}$) on the vertices of $A^*_k\cup S^*_k$. Thus we have
\begin{equation*}
|E(\mathcal{H}_k)|=\bigg(\binom{d_k+k-1}{r}-\binom{k-1}{r}\bigg)\frac{n-k+1-t_k}{d_k}+\binom{t_k+k-1}{r}.
\end{equation*}
The skeleton of any Berge-$S_{d_j}$ for $1\le j\le k$ in $\mathcal{H}_k$ must use at least one vertex of $A^*_k$. But there are only $k-1$ vertices in $A^*_k$. Therefore we have $\mathcal{H}_k$ is Berge-$\bigcup\limits_{1\le j\le k} S_{d_j}$-free for any $1\le i\le k$. Thus we can choose $i$ such that $E(\mathcal{H}_i)$ (denoted by $\mathcal{H}^*$)
attains its maximum value, {\it i.e.},
\begin{equation*}
|E(\mathcal{H}^*)|=\mathop{max}\limits_{1\le i\le k}\bigg\{\bigg(\binom{d_i+i-1}{r}-\binom{i-1}{r}\bigg)\frac{n-i+1-t_i}{d_i}+\binom{t_i+i-1}{r}\bigg\}.
\end{equation*}

If $r\ge d_k+k-1$, then we can present an extremal construction for 
an $r$-uniform $n$-vertex Berge-$\bigcup\limits_{1\le i\le k} S_{d_i}$-free hypergraph $\mathcal{H}'_k$, as follows. 
Since $r-k+1$ divides $n-k+1$ and $r-k+1\ge d_k$, we can construct a 
$(d_k-1)$-regular $(r-k+1)$-uniform hypergraph on a vertex set $B^*$ of size $n-k+1$.
Then we add a fixed vertex set $A^*$ of size $k-1$ to each hyperedge to form an $r$-uniform hypergraph on $n$ vertices. Each vertex in $B^*$ has degree $d_k-1$, so the skeleton of any
Berge-$S_{d_j}$ for $1\le j\le k$ must use at least one vertex from $A^*$. Therefore, $\mathcal{H}'_k$ is Berge-$\bigcup\limits_{j=1}^k S_{d_j}$-free and 
\begin{equation*}
|E(\mathcal{H}'_k)|=\frac{d_k-1}{r-k+1}(n-k+1).
\end{equation*}
Thus we can get an extremal hypergraph $\mathcal{H}'$ such that
\begin{equation*}
|E(\mathcal{H}')|=\mathop{max}\limits_{1\le i\le k}\bigg\{\frac{d_i-1}{r-i+1}(n-i+1)\bigg\}.
\end{equation*}

Next we  prove the upper bound. We proceed by induction on $r$. If $r=2$ ($r\le d_i+i-1$ for any $1\le i\le k$), then a Berge-$S_{d_i}$ is simply a copy of the graph $S_{d_i}$ for any $1\le i\le k$.
By Theorem \ref{ch1:LLP}, we have
\begin{equation*}
\begin{aligned}
ex_2\bigg(n,\mbox{Berge}-\bigcup\limits_{1\le i\le k} S_{d_i}\bigg)&=ex\bigg(n,\bigcup\limits_{1\le i\le k} S_{d_i}\bigg)\\
&\le \mathop{max}\limits_{1\le i\le k}\Bigg\{(i-1)(n-i+1)+\binom{i-1}{2}+\frac{d_i-1}{2}(n-i+1)\Bigg\}\\
&=\mathop{max}\limits_{1\le i\le k} \bigg\{\frac{n-i+1}{d_i}\bigg(\binom{d_i+i-1}{2}-\binom{i-1}{2}\bigg)+\binom{i-1}{2}\bigg\}.
\end{aligned}
\end{equation*}

Let $r\ge 3$ and assume that the statement of the theorem holds for $r-1$. Let $\mathcal{H}$ be an $r$-uniform $n$-vertex Berge-$\bigcup\limits_{1\le i\le k} S_{d_i}$-free hypergraph of size at least $|E(\mathcal{H}^*)|$. Let $A$ be the set of vertices in $\mathcal{H}$ of degree larger than some fixed large enough constant $D=D(d_1,k,r)$, and let $B=V(\mathcal{H})\setminus A$. Put $c=|A|$.
We first prove the following claim.
\begin{claim}
$c\le k-1$.
\end{claim}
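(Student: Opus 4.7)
The plan is to follow the same strategy as in the proof of the analogous claim $s\le k-1$ in Theorem~\ref{ch1:thm3}: assume for contradiction that $c\ge k$, fix an arbitrary subset $A'=\{u_1,u_2,\ldots,u_k\}\subseteq A$, and construct a Berge-$\bigcup\limits_{i=1}^k S_{d_i}$ inside $\mathcal{H}$ with the vertices of $A'$ playing the role of the $k$ star centers. Since this would contradict the assumption that $\mathcal{H}$ is Berge-$\bigcup\limits_{i=1}^k S_{d_i}$-free, we would conclude $c\le k-1$. The new difficulty relative to Theorem~\ref{ch1:thm3} is the smaller uniformity $r\le d_1+k-2$, so an individual hyperedge at $u_i$ need not contain any vertex outside the constant-size forbidden set built up from previously constructed stars, and a short SDR-style argument is required in place of the simpler counting used before.

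I would construct the Berge-stars $\mathcal{S}_1,\mathcal{S}_2,\ldots,\mathcal{S}_k$ sequentially, where $\mathcal{S}_i$ is a Berge-$S_{d_i}$ centered at $u_i$, keeping the hyperedges of $\mathcal{S}_i$ distinct from those of $\mathcal{S}_1,\ldots,\mathcal{S}_{i-1}$ and the skeleton of $\mathcal{S}_i$ vertex-disjoint from the union of the earlier skeletons together with $A'\setminus\{u_i\}$. At step $i$, let $F_i$ denote the set of forbidden skeleton vertices and $M_i$ the set of already-used hyperedges; both have size bounded by a constant depending only on $d_1$ and $k$, independent of $n$. Among the more than $D$ hyperedges through $u_i$, at most $|M_i|$ lie in $M_i$, and at most $\binom{|F_i|}{r-1}$ have all their non-$u_i$ vertices inside $F_i$, so a pool $\mathcal{P}_i$ of at least $D-O(1)$ good hyperedges remains, each containing at least one vertex outside $\{u_i\}\cup F_i$ available as a potential leaf.

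The remaining task is to pick $d_i$ hyperedges from $\mathcal{P}_i$ together with pairwise distinct leaves avoiding $F_i$ and $u_i$, which is a system-of-distinct-representatives problem on the bipartite incidence graph between $\mathcal{P}_i$ and $V(\mathcal{H})\setminus(F_i\cup\{u_i\})$. By K\"{o}nig's theorem, the absence of a matching of size $d_i$ would produce a vertex cover of size less than $d_i$; counting the hyperedges of $\mathcal{P}_i$ not covered by the hyperedge side of that cover would then force $|\mathcal{P}_i|<d_i+\binom{|F_i|+d_i}{r-1}$, and choosing $D=D(d_1,k,r)$ larger than this bound gives a contradiction, so the desired SDR exists and $\mathcal{S}_i$ can indeed be produced. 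Stacking $\mathcal{S}_1,\ldots,\mathcal{S}_k$ then yields a Berge-$\bigcup\limits_{i=1}^k S_{d_i}$ in $\mathcal{H}$, contradicting the hypothesis on $\mathcal{H}$ and forcing $c\le k-1$. The main obstacle is precisely this K\"{o}nig/SDR verification; once $D$ is chosen as a sufficiently large function of $d_1$, $k$ and $r$, the greedy leaf-selection always succeeds and the argument is complete.
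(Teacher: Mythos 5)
Your proposal is correct, and its global structure is the same as the paper's: suppose $c\ge k$, pick $k$ vertices of degree at least $D$, and greedily realize them as centers of hyperedge-disjoint Berge-stars with pairwise disjoint skeletons, producing a Berge-$\bigcup\limits_{i=1}^{k}S_{d_i}$ and a contradiction. Where you differ is in the local extraction step. The paper builds a Berge-$\bigcup\limits_{j=1}^{k-1}S_{d_j}$ on $k-1$ of the chosen centers, then at the remaining center $u$ deletes the used hyperedges and applies Lemma~\ref{lem3} to obtain one large Berge-$S_{\ell}$ with $\ell=\sum_{j=1}^{k-1}(d_j+1)+d_k$, finally discarding the at most $\sum_{j=1}^{k-1}(d_j+1)$ leaves that collide with the earlier skeletons so that a fresh Berge-$S_{d_k}$ survives. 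You instead perform a uniform step at every center: after discarding the constantly many already-used hyperedges, you extract $d_i$ distinct leaves outside the forbidden set via a defect-K\"{o}nig (SDR) argument, bounding the number of hyperedges missed by a vertex cover of size less than $d_i$ by $\binom{|F_i|+d_i}{r-1}$. This in effect reproves, inline, a relative version of Lemma~\ref{lem3} that additionally avoids a prescribed constant-size vertex set and hyperedge set; it is self-contained and arguably tidier than the paper's ``find a bigger star and prune'' device, and both arguments only require $D$ to exceed an explicit constant depending on $d_1$, $k$ and $r$.
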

\begin{proof}
Suppose to the contrary that $c\ge k$ and let $A'\subseteq A$ be a set of $k$ vertices of degree at least $D$. As $D$ is large enough, there must be a Berge-$S_{d_1}$ with center in $A'$ but whose skeleton is otherwise disjoint from $A'$.
Now we suppose that there exists a Berge-$\bigcup\limits_{j=1}^{k-1} S_{d_j}$ whose skeleton intersects 
$A'$ in $k-1$ vertices. This Berge-$\bigcup\limits_{j=1}^{k-1} S_{d_j}$ has $\bigcup\limits_{j=1}^{k-1} d_j$ hyperedges and its skeleton spans $\bigcup\limits_{j=1}^{k-1} (d_j+1)$ vertices. Let $u$ be the vertex in $A'$ not in the skeleton of this Berge-$\bigcup\limits_{j=1}^{k-1} d_j$.
Then we remove the hyperedges of the Berge-$\bigcup\limits_{j=1}^{k-1} d_j$ from $\mathcal{H}$.
As $D$ is large enough, by Lemma \ref{lem3} we have that among the remaining hyperedges there is a Berge-$S_{\cup_{1\le j\le k-1} (d_j+1)+d_k}$, denoted by $\mathcal{S}$, with center $u$. Thus $\mathcal{S}$ is hyperedge-disjoint from the Berge-$\bigcup\limits_{j=1}^{k-1} S_{d_j}$. However, the skeleton of both of these subhypergraphs may intersect. At most $\bigcup\limits_{j=1}^{k-1} (d_j+1)$ vertices of the skeleton of $\mathcal{S}$
are shared with the skeleton of the Berge-$\bigcup\limits_{j=1}^{k-1} S_{d_j}$. Therefore, there is a Berge-$S_{d_k}$ whose skeleton is disjoint from the skeleton of the Berge-$\bigcup\limits_{j=1}^{k-1} S_{d_j}$. In particular, we have a Berge-$\bigcup\limits_{j=1}^{k} S_{d_j}$, a contradiction.
\end{proof}

Next we distinguish two cases.

\medskip\noindent
{\bf Case 1. $c=k-1$.}

We will discuss two subcases.

\medskip\noindent
{\bf Subcase 1.1. $r\le d_k+k-2$.}

We claim that each vertex of $B$ has degree at most $\binom{d_k+k-2}{r-1}$. Otherwise, by Lemma \ref{lem3} we have that there is a Berge-$S_{d_k+k-1}$ with center $x$ which has degree greater than $\binom{d_k+k-2}{r-1}$. The skeleton of this Berge-$S_{d_k+k-1}$ uses at most $k-1$ vertices from $A$, so there remains a Berge-$S_{d_k}$, denoted by $\mathcal{S}$, whose skeleton is contained in $B$. As $D$ is large enough, we may construct $k-1$ more pairwise hyperedge-disjoint copies of a Berge-$S_{d_1}$ (with pairwise vertex-disjoint skeletons) that are hyperedge-disjoint from $\mathcal{S}$ and whose skeletons are vertex-disjoint
from the skeleton of $\mathcal{S}$. Thus, we have a copy of a Berge-$\bigcup\limits_{1\le i\le k} S_{d_i}$ in $\mathcal{H}$, a contradiction.

Now we compare $\mathcal{H}$ with $\mathcal{H}_k$. Firstly, by definition we have $|E(\mathcal{H})|\ge |E(\mathcal{H}^*)|\ge |E(\mathcal{H}_k)|$. Suppose that there are two vertices $y\in A$ and $y^*\in A^*_k$ such that $d_{\mathcal{H}}(y)>d_{\mathcal{H}_k}(y^*)$.
For each $1\le j\le r-1$, we define
\begin{equation*}
E^y_{j}=\{e\setminus A|y\in e\in E(\mathcal{H}), |e\setminus A|=j\}
\end{equation*}
and
\begin{equation*}
E^{y^*}_{j}=\{f\setminus A^*|y^*\in f\in E(\mathcal{H}_k), |f\setminus A^*|=j\}.
\end{equation*}
It is easy to see that the members of $E^y_j$ have multiplicity at most $\binom{k-2}{r-1-j}$ and those in $E^{y^*}_j$ 
have multiplicity exactly $\binom{k-2}{r-1-j}$.

If $k-1\ge r-1$, then by the construction of $\mathcal{H}_k$ we have that each vertex of $B^*_k$ is in a hyperedge with each subset of $A^*_k$ of size $r-1$. This implies that $|E^{y^*}_1|\ge |E^y_1|$. When $k-1<r-1$, then $E^{y^*}_1=E^y_1=\emptyset$. Therefore, since $d_{\mathcal{H}}(y)\ge d_{\mathcal{H}_k}(y^*)$, we have that $|E^y_j|>|E^{y^*}_j|$ for some $j\ge 2$. Let $\mathcal{G}$ be the $j$-uniform hypergraph resulting from deleting all repeated hyperedges in this $E^y_j$. Thus, the number of hyperedges in the $j$-uniform hypergraph $\mathcal{G}$ can be calculated as
\begin{equation*}
\begin{aligned}
|E(\mathcal{G})|&\ge \binom{k-2}{r-1-j}^{-1}|E^y_j|\\
&>\binom{k-2}{r-1-j}^{-1}|E^{y^*}_j|\\
&=\binom{k-2}{r-1-j}^{-1}\binom{k-2}{r-1-j}\frac{n-k+1}{d_k}\binom{d_k}{j}\\
&=\frac{n-k+1}{d_k}\binom{d_k}{j}\\
&\ge ex_j(n-k+1,\mbox{Berge-}S_{d_k}).
\end{aligned}
\end{equation*}

Therefore, there is a $j$-uniform Berge-$S_{d_k}$ in $\mathcal{G}$ on the vertices of $B$. As each hyperedge of $\mathcal{G}$ is contained in a hyperedge of $\mathcal{H}$, this corresponds to a Berge-$S_{d_k}$ in $\mathcal{H}$ whose skeleton is contained in $B$. As before, the degree condition on the vertices in $A$ guarantees the existence of a Berge-$\bigcup\limits_{1\le j\le k-1} S_{d_j}$
that together with this Berge-$S_{d_k}$ forms a Berge-$\bigcup\limits_{1\le j\le k} S_{d_j}$ in $\mathcal{H}$, a contradiction.

Therefore, every vertex $y\in A$ has degree at most that of the vertices in $A^*_k$ in the construction $\mathcal{H}_k$. Every vertex in $B$ has degree at most
$\binom{d_k+k-2}{r-1}$ while every vertex in $B^*_k$ has degree exactly $\binom{d_k+k-2}{r-1}$.
Thus, $|E(\mathcal{H})|\le |E(\mathcal{H}_k)|\le |E(\mathcal{H}^*)|$.

\medskip\noindent
{\bf Subcase 1.2. $r\ge d_k+k-1$.}

We claim that each vertex of $B$ has degree at most $d_k-1$.
Suppose that there is a vertex $v$ in $B$ of degree at least $d_k$. Let us remove vertices from the hyperedges incident to $v$ so that the resulting hyperedges are disjoint from $A$ and of size exactly $d_k$. This is possible as $r\ge d_k+k-1$. By Lemma \ref{lem2}, there is a Berge-$S_{d_k}$ with center $v$ among these $d_k$-uniform hyperedges. The skeleton of this Berge-$S_{d_k}$ is necessarily contained in $B$. It is easy to see that this Berge-$S_{d_k}$ corresponds to a Berge-$S_{d_k}$, denoted by $\mathcal{S}$, in $\mathcal{H}$ whose skeleton is contained in $B$.
Since the degrees of the vertices in $A$ are large enough, we can construct a Berge-$\bigcup\limits_{j=1}^{k-1} S_{d_j}$ whose hyperedges and skeleton are disjoint from those of $\mathcal{S}$. Therefore $\mathcal{H}$ contains a Berge-$\bigcup\limits_{j=1}^{k} S_{d_j}$, a contradiction.
Thus we have
\begin{equation*}
\begin{aligned}
r|E(\mathcal{H})|&=\sum\limits_{u\in A} d(u)+\sum\limits_{v\in B} d(v)\\
&\le (k-1)|E(\mathcal{H})|+(d_k-1)(n-k+1).
\end{aligned}
\end{equation*}
Solving for $|E(\mathcal{H})|$ gives
\begin{equation*}
|E(\mathcal{H})|\le \frac{d_k-1}{r-k+1}(n-k+1)\le E(\mathcal{H}').
\end{equation*}

\medskip\noindent
{\bf Case 2. $c<k-1$.}

If $r\le d_c+c$, then we claim that $d(B)\le \binom{d_c+c}{r-1}$, where $d(B)$ is the average degree of the vertices in $B$. Otherwise, we have $d(B)\ge \binom{d_c+c}{r-1}+\varepsilon$ for a constant $0<\varepsilon<1$. Let $s$ be the number of vertices of degree at most $\binom{d_c+c}{r-1}$ in $B$. Recall that the vertices in $B$ have degree at most $D$. Thus, the sum of degrees in $B$ is at most $\binom{d_c+c}{r-1}s+D(n-c-s)$. Combining these estimates and solving for $s$ gives
\begin{equation*}
s\le \frac{D-\binom{d_c+c}{r-1}-\varepsilon}{D-\binom{d_c+c}{r-1}}(n-c)=(1-\varepsilon')(n-c)
\end{equation*}
for some $\varepsilon'>0$ not depending on $n$. Therefore, the number of vertices of degree greater than $\binom{d_c+c}{r-1}$ is $\varepsilon' n=\Omega(n)$.

Let us call a pair of vertices $u,v\in B$ \textit{far} if they do not share a common neighbor in $B$ (they may still have a common neighbor in $A$). Since the vertices of $B$ have constant maximum degree we can find a subset $B'$ of size $\Omega(n)$ such that all vertices have degree greater than $\binom{d_c+c}{r-1}$ and all pairs of vertices are far.

For each vertex $u\in B'$, by Lemma \ref{lem3} we have that there is a Berge-$S_{d_c+c+1}$ with center $u$ whose skeleton is disjoint from $A$. Since any two vertices of $B'$ do not share a common neighbor in $B$, we have a collection of hyperedge-disjoint copies of a Berge-$S_{d_c}$. As $n$ is large enough, we can find a Berge-$\bigcup\limits_{c+1\le j\le k} S_{d_j}$ whose skeleton is disjoint from $A$. And since the degrees of the vertices in $A$ are large enough, we can construct a Berge-$\bigcup\limits_{1\le j\le c} S_{d_j}$ whose hyperedges and skeleton are disjoint from those of Berge-$\bigcup\limits_{c+1\le j\le k} S_{d_j}$.
Therefore, $\mathcal{H}$ contains a Berge-$\bigcup\limits_{1\le i\le k} S_{d_i}$, a contradiction.

Observe that $\sum\limits_{x\in A} d(x)\le c|E(\mathcal{H})|$ and
\begin{equation*}
r|E(\mathcal{H})|=\sum\limits_{x\in B} d(x)+\sum\limits_{x\in A} d(x)\le d(B)(n-c)+c|E(\mathcal{H})|.
\end{equation*}
Solving for $|E(\mathcal{H})|$ gives
\begin{equation*}
|E(\mathcal{H})|\le \frac{\binom{d_c+c}{r-1}}{r-c}(n-c).
\end{equation*}
Since $c<k-1$,
\begin{equation*}
|E(\mathcal{H})|\le \mathop{max}\limits_{1\le c<k-1}\bigg\{\frac{\binom{d_c+c}{r-1}}{r-c}(n-c)\bigg\}.
\end{equation*}

Similarly, if $r\ge d_c+c+1$ then we have $d(B)\le d_c+c$. In this case, we have that
\begin{equation*}
|E(\mathcal{H})|\le \mathop{max}\limits_{1\le c<k-1}\bigg\{\frac{d_c+c}{r-c}(n-c)\bigg\}.
\end{equation*} \qed

\section{Concluding Remarks}
We complete this paper with two consequences of our results for 
 generalized Tur\'{a}n numbers. 
 Given two graphs $H$ and $F$, the generalized Tur\'{a}n number of $F$, 
 denoted by $ex(n,H,F)$, 
  is defined as the maximum number of copies of a subgraph $H$ in an $n$-vertex $F$-free graph. 
  These generalized Tur\'{a}n numbers 
  have been studied by different groups of researchers for 
  many types of graphs. 
  We refer the interested readers to  \cite{AS16,HHL23,MQ20,W20,ZWZ22,ZC22} 
  for more details. 

Let $G$ be an 
$n$-vertex 
$F$-free graph. We define an $r$-uniform hypergraph $\mathcal{H}$ with the same vertex set as $G$, 
such that 
an $r$-set in $\mathcal{H}$ is a hyperedge if and only if it is the vertex set of a $K_r$ in $G$. Note that 
this implies that 
$\mathcal{H}$ contains no Berge-$F$ if $G$ is $F$-free. Thus,
\begin{equation}\label{eq1}
ex(n,K_r,F)\le ex_r(n,\mbox{Berge-}F).
\end{equation}

For the case that 
$r\ge d_1+k-1$, by Theorem \ref{ch1:thm3} and (\ref{eq1}) we 
obtain the following corollary. 
\begin{corollary}
Fix integers $d_1\ge d_2\ge \ldots \ge d_k\ge 2$ and $k\ge 2$. If $r\ge d_1+k-1$, then for $n$ large enough,
\begin{equation*}
ex\bigg(n,K_r,\mbox{Berge-}\bigcup\limits_{1\le i\le k}S_{d_i}\bigg)\le \mathop{max}\limits_{1\le i\le k}\bigg\{\frac{d_i-1}{r-i+1}(n-i+1)\bigg\}.
\end{equation*}
\end{corollary}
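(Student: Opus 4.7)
The plan is to combine inequality (\ref{eq1}) with Theorem \ref{ch1:thm3} directly. First I would fix an $n$-vertex graph $G$ that is $\bigcup_{i=1}^k S_{d_i}$-free and achieves the maximum possible number of $K_r$-subgraphs, and form the $r$-uniform hypergraph $\mathcal{H}$ on the same vertex set whose hyperedges are precisely the vertex sets of the $K_r$-copies in $G$. The observation recorded in (\ref{eq1}) guarantees that $\mathcal{H}$ is Berge-$\bigcup_{i=1}^k S_{d_i}$-free, since any skeleton of a Berge-star-forest inside $\mathcal{H}$ would sit inside $G$ as a genuine star forest, contradicting the choice of $G$.

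Since $|E(\mathcal{H})|$ equals the number of $K_r$-copies in $G$, and the hypothesis $r \ge d_1 + k - 1$ matches exactly the hypothesis of Theorem \ref{ch1:thm3}, I would then apply that theorem to obtain
\begin{equation*}
|E(\mathcal{H})| \le ex_r\bigg(n, \mbox{Berge-}\bigcup_{i=1}^k S_{d_i}\bigg) \le \max_{1 \le s \le k-1}\bigg\{\frac{d_{s+1}-1}{r-s}(n-s)\bigg\}.
\end{equation*}
A change of index $i = s+1$ rewrites the right-hand side as $\max_{2 \le i \le k}\{\frac{d_i-1}{r-i+1}(n-i+1)\}$, which is at most $\max_{1 \le i \le k}\{\frac{d_i-1}{r-i+1}(n-i+1)\}$ simply because the index set over which the maximum is taken has been enlarged.

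There is essentially no obstacle to overcome: all the real work has been carried out in Theorem \ref{ch1:thm3}, and this corollary is a transfer across inequality (\ref{eq1}) followed by a cosmetic re-indexing. The only point that needs a brief check is that the index ranges line up correctly; in fact one could sharpen the statement by taking the maximum over $2 \le i \le k$ only, since for $k \ge 2$ the $i=1$ term $\frac{d_1-1}{r}\, n$ is never the binding one.
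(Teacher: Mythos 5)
Your proof is correct and is exactly the paper's argument: combine inequality (\ref{eq1}) with Theorem \ref{ch1:thm3}, substitute $i=s+1$, and observe that enlarging the index set of the maximum from $2\le i\le k$ to $1\le i\le k$ only weakens the bound. One caveat on your closing aside: the claim that the $i=1$ term $\frac{d_1-1}{r}n$ is ``never the binding one'' is false (take $d_1$ large and $d_2=\cdots=d_k=2$ with $r\ge d_1+k-1$; then $\frac{d_1-1}{r}n$ dominates $\frac{d_i-1}{r-i+1}(n-i+1)$ for all $i\ge 2$), although the sharpened statement with the maximum over $2\le i\le k$ is nevertheless valid, simply because that is literally what Theorem \ref{ch1:thm3} delivers.
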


Similarly, for the case that 
$r\le d_1+k-1$, by Theorem \ref{ch1:thm4} and (\ref{eq1}) we 
obtain the following corollary.
\begin{corollary}
Fix integers $d_1\ge d_2\ge \ldots \ge d_k\ge 2$ and $k\ge 2$. If $r\le d_1+k-2$, then for $n$ large enough,
\begin{equation*}
\begin{aligned}
ex\bigg(n,K_r,\mbox{Berge-}\bigcup\limits_{1\le i\le k}S_{d_i}\bigg)&\le \mathop{max}\bigg\{\mathop{max}\limits_{1\le i\le k}\bigg\{\bigg(\binom{d_i+i-1}{r}-\binom{i-1}{r}\bigg)\left\lceil\frac{n-i+1}{d_i}\right\rceil+\binom{i-1}{r},\\
&\frac{d_i-1}{r-i+1}(n-i+1)\bigg\},\mathop{max}\limits_{1\le c<k-1}\bigg\{\frac{\mathop{max}\Big\{\binom{d_c+c}{r-1},d_c+c\Big\}}{r-c}(n-c)\bigg\}\bigg\}.
\end{aligned}
\end{equation*}
\end{corollary}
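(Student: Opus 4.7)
The plan is to derive this corollary directly from Theorem~\ref{ch1:thm4} by applying the standard reduction encapsulated in inequality~(\ref{eq1}). First I would let $G$ be an $n$-vertex $\bigcup_{1\le i\le k} S_{d_i}$-free graph attaining $ex(n,K_r,\bigcup_{1\le i\le k} S_{d_i})$, and associate to it the $r$-uniform hypergraph $\mathcal{H}$ on $V(G)$ whose hyperedges are precisely the vertex sets of the copies of $K_r$ in $G$, exactly as in the construction preceding~(\ref{eq1}). Then $|E(\mathcal{H})|=ex(n,K_r,\bigcup_{1\le i\le k} S_{d_i})$.

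The next step is to verify that $\mathcal{H}$ is Berge-$\bigcup_{1\le i\le k} S_{d_i}$-free. This is the short check already recorded just before~(\ref{eq1}): if $\mathcal{H}$ contained such a Berge substructure with injection $f$ and bijection $f'$, then for each edge $uv$ of the star forest there would be an $r$-set $f'(uv)\in E(\mathcal{H})$ with $\{f(u),f(v)\}\subseteq f'(uv)$; since every hyperedge of $\mathcal{H}$ is the vertex set of a $K_r$ in $G$, the pair $\{f(u),f(v)\}$ would be an edge of $G$, producing a copy of $\bigcup_{1\le i\le k} S_{d_i}$ in $G$, a contradiction. Hence $|E(\mathcal{H})|\le ex_r(n,\mbox{Berge-}\bigcup_{1\le i\le k} S_{d_i})$, which is inequality~(\ref{eq1}) for our chosen $F$.

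Finally, because the hypothesis $r\le d_1+k-2$ places us exactly in the regime covered by Theorem~\ref{ch1:thm4}, I would plug the upper bound supplied by that theorem into the right-hand side of the inequality just obtained. The resulting expression is, term for term, the bound asserted by the corollary, so the proof is complete. No step here poses any real obstacle: the corollary is a purely formal consequence of Theorem~\ref{ch1:thm4} and~(\ref{eq1}), and the only minor task is to confirm that the hypotheses $d_1\ge\cdots\ge d_k\ge 2$, $k\ge 2$ and $r\le d_1+k-2$ imposed here match those of Theorem~\ref{ch1:thm4}, which is immediate.
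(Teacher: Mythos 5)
Your proposal is correct and is exactly the paper's argument: the authors derive this corollary purely formally by combining inequality~(\ref{eq1}) with the upper bound of Theorem~\ref{ch1:thm4}, which is what you do (you merely spell out the short verification behind~(\ref{eq1}) that the clique hypergraph of a star-forest-free graph is Berge-star-forest-free). Nothing further is needed.
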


\end{document}